\documentclass{amsart}

\usepackage{a4wide,latexsym,amssymb,amsthm,amsmath,color}

\theoremstyle{plain}

\newtheorem{theorem}{Theorem}
\newtheorem{lemma}{Lemma}
\newtheorem{proposition}{Proposition}

\theoremstyle{remark}

\newtheorem{remark}{Remark}

\def\R{\mathbb{R}}
\def\C{\mathbb{C}}

\newcommand{\nks}{\ensuremath{\mathbb{S}^3 \times
\mathbb{S}^3}}

\begin{document}

\title[Lagrangian submanifolds of the nearly K\"ahler  $\mathbb{S}^3 \times \mathbb{S}^3$]{Lagrangian submanifolds of the nearly K\"ahler  $\mathbb{S}^3 \times \mathbb{S}^3$ from minimal surfaces in  $\mathbb{S}^3$}

\author[B. Bekta\c s]{Burcu Bekta\c s}
\address{Istanbul Technical University, Faculty of Science and Letters, Department of Mathematics, 34469, Maslak, 
Istanbul, Turkey} 
\email{bektasbu@itu.edu.tr}

\author[M. Moruz]{Marilena Moruz}
\address{LAMAV, ISTV2 Universit\'e de Valenciennes, Campus du Mont Houy, 59313 Valenciennes Cedex 9,  France}
\email{marilena.moruz@gmail.com}

\author[J. Van der Veken]{Joeri Van der Veken}
\address{KU Leuven, Department of Mathematics, Celestijnenlaan 200B -- Box 2400, BE-3001 Leuven, Belgium} 
\email{joeri.vanderveken@wis.kuleuven.be}

\author[L. Vrancken]{Luc Vrancken}
\address{LAMAV, ISTV2 \\Universit\'e de Valenciennes\\ Campus du Mont Houy\\ 59313 Valenciennes Cedex 9\\ France and KU  Leuven \\ Department of Mathematics \\Celestijnenlaan 200B -- Box 2400 \\ BE-3001 Leuven \\ Belgium}  
\email{luc.vrancken@univ-valenciennes.fr}

\thanks{This work was partially supported by the project 3E160361 (Lagrangian and calibrated submanifolds) of the KU Leuven research fund and part of it was carried out while the first author visited KU Leuven, supported by The Scientific and Technological Research Council of Turkey (TUBITAK), under grant 1059B141500244.}

\begin{abstract}
We study non-totally geodesic Lagrangian submanifolds of the nearly K\"ahler $\nks$ for which the projection on the first component is nowhere of maximal rank. We show that this property can be expressed in terms of the so called angle functions and that such Lagrangian submanifolds are closely related to  minimal surfaces in $\mathbb{S}^3$. Indeed, starting from an arbitrary minimal surface, we can construct locally a large family of such Lagrangian immersions, including one exceptional example.
\end{abstract}
\date{}
\maketitle

\section{Introduction}
 The nearly K\"ahler manifolds are almost Hermitian manifolds with almost complex structure $J$ for which the tensor field $\tilde \nabla J$ is skew-symmetric, where $\tilde\nabla$ is the Levi Civita connection. They  have been studied intensively in the 1970's by Gray (\cite{9}). Nagy (\cite{nagy}, \cite{nagy2}) made further contribution to the classification of nearly K\"ahler manifolds and more recently it has been shown by Butruille (\cite{6}) that the only homogeneous 6-dimensional nearly K\"ahler manifolds are the nearly K\"ahler 6-sphere $\mathbb{S}^6$, $ \mathbb{S}^3\times  \mathbb{S}^3$,  the projective space $\mathbb{C}P^3$ and the flag manifold $SU(3)/U(1)\times U(1)$, where the last three are not endowed with the standard metric. All these spaces are compact 3-symmetric spaces. Note that, recently, the first complete non homogeneous nearly K\"ahler structures were discovered on $\mathbb{S}^6$ and $\nks$  in \cite{Haskins}.\\
A natural question for the above mentioned four homogeneous nearly K\"ahler manifolds is to study their submanifolds. There are two natural types of submanifolds of nearly K\"ahler (or more generally, almost Hermitian) manifolds, namely almost complex and totally real submanifolds. Almost complex submanifolds are submanifolds whose tangent spaces are invariant under $J$. For a totally real submanifold, a tangent vector is mapped by the almost complex structure $J$ into a normal vector. In this case, if additionally the dimension of the submanifold is half the dimension of the ambient manifold, then the submanifold is Lagrangian.\\
Note that the Lagrangian submanifolds of nearly K\"ahler manifolds are especially interesting as they are always minimal and orientable (see \cite{Ejiri2} for $\mathbb{S}^6$ or \cite{Schafer}, \cite{Ivanov} for the general case).
Lagrangian submanifolds of $\mathbb{S}^6$ have been studied by many authors (see, amongst others, \cite{Dillen-Vrancken-Verstr}, \cite{Dillen-Vrancken}, \cite{Ejiri1}, \cite{Ejiri2},\cite{Vrancken1}, \cite{Vrancken}, \cite{Lotay} and \cite{Palmer}), whereas the study of Lagrangian submanifolds of $\nks$ only started recently. The first examples of those were given in \cite{Schafer} and \cite{Moroianu}.
Moreover, in \cite{diooshuetal} and \cite{dioosvranckenwang}, the authors obtained a classification of the Lagrangian submanifolds, which  are either totally geodesic or have constant sectional curvature. An important tool in the study in \cite{dioosvranckenwang} and \cite{diooshuetal}  is the use of an almost product structure $P$ on $\nks$, which was introduced in \cite{complexsurfaces}. Its definition is recalled in Section \ref{prelim}. The decomposition of $P$ into a tangential part and a normal part along a Lagrangian submanifold allows us to introduce three principal directions, $E_1,E_2,E_3$, with corresponding angle functions $\theta_1,\theta_2,\theta_3$.\\
In this paper we are interested in studying non-totally geodesic Lagrangian submanifolds $f:M\rightarrow \nks : x\mapsto f(x)=(p(x),q(x))$, for which the first component has nowhere maximal rank. Basic properties of the structure and its Lagrangian submanifolds are given in Section \ref{prelim}. In Section \ref{Results}, we show that in this case $\theta_1=\frac{\pi}{3}$ (Theorem \ref{th1}) and $p(M)$ has to be a (branched) minimal surface in $\mathbb{S}^3$ (Theorem \ref{th3}).
Conversely, for a non-totally geodesic minimal surface in $\mathbb{S}^3$ which, corresponds to a solution of the Sinh-Gordon equation, $\Delta\omega=-8\sinh\omega$, and for an additional arbitray solution of the Liouville equation, $\Delta\mu=-e^\mu$, we can construct locally a Lagrangian immersion of $\nks$. Thus, we obtain a large class of examples of Lagrangian immersions. 
We also obtain that a similar class of Lagrangian immersions can be associated to a totally geodesic surface in $\mathbb{S}^3$. This last case contains, in particular, the constant curvature sphere obtained in \cite{dioosvranckenwang}. Additionally, for each non-totally geodesic minimal surface, we obtain also one  exceptional example. In case of the Clifford torus in $\mathbb{S}^3$, this additional example is the flat Lagrangian torus in $\nks$ discovered in \cite{dioosvranckenwang}. We also show that any non-totally geodesic Lagrangian immersion for which the first component  has nowhere maximal rank is obtained by applying one of the three previously mentioned constructions. The main results are summarized in Section \ref{conclusion}.

\section{Preliminaries}\label{prelim}
In this section we recall the homogeneous nearly K\"ahler structure of  $\mathbb{S}^3\times\mathbb{S}^3$ and we mention some of the known results from \cite{dioosvranckenwang} and \cite{diooshuetal}.\\
By the natural identification $T_{(p,q)}(\mathbb{S}^3\times\mathbb{S}^3)\cong T_p\mathbb{S}^3\oplus T_q\mathbb{S}^3$, we write a tangent vector at $(p,q)$  as $Z(p,q)=(U(p,q),V(p,q))$ or simply $Z=(U,V)$. We regard the $3$-sphere as the set of all unit quaternions in $\mathbb{H}$ and we use the notations $i,j,k$ to denote the imaginary units of $\mathbb{H}$. In computations it is often  useful to write a tangent vector $Z(p,q)$ at $(p,q)$ on $\mathbb{S}^3\times\mathbb{S}^3$ as $(p\alpha,q\beta)$, with $\alpha$ and $\beta$  imaginary quaternions. This is possible as for $v\in T_p\mathbb{S}^3$ we know that $\langle v,p\rangle=0$ and, in addition,  for $p\in \mathbb{S}^3$ we can always find $\tilde v\in \mathbb{H}$ such that $v=p\tilde v$ . Moreover, $Re(\tilde v)=0$ as it follows from $\langle v,p\rangle=0$ when we expand $p$ and $\tilde v$ in the basis $\{1,i,j,k\}$.\\
We define the vector fields 
\begin{equation} \label{baza}
\begin{array}{ll}
\tilde E_1(p,q)=(pi,0),&\tilde  F_1(p,q)=(0,qi),\\
\tilde E_2(p,q)=(pj,0),& \tilde  F_2(p,q)=(0,qj),\\
\tilde E_3(p,q)=-(pk,0),&\tilde   F_3(p,q)=-(0,qk),
 \end{array} \end{equation}
which are mutually orthogonal with respect to the usual Euclidean product metric  on  $\mathbb{S}^3\times\mathbb{S}^3$. The Lie brackets are $[\tilde E_i,\tilde E_j]=-2\varepsilon_{ijk}\tilde E_k,$ $[\tilde F_i,\tilde F_j]=-2\varepsilon_{ijk}\tilde F_k$ and $[\tilde E_i,\tilde F_j]=0$, where
\begin{align*}
\varepsilon_{ijk}=\left\{
\begin{array}{l}
1, \quad \text{if} \ (ijk)\ \text{is an even permutation of } (123),\\
-1, \quad \text{if} \ (ijk)\ \text{is an odd permutation of } (123),\\
0, \quad \text{otherwise.}
\end{array}
\right.
\end{align*}
The almost complex structure $J$ on the nearly K\"ahler $\mathbb{S}^3\times\mathbb{S}^3$ is defined by 
\begin{equation}
J(U,V)_{(p,q)}=\frac{1}{\sqrt{3}} (2pq^{-1}V-U, -2qp^{-1}U+V ),
\end{equation}
for $(U,V)\in T_{(p,q)}(\mathbb{S}^3\times\mathbb{S}^3)$. 
The nearly K\"ahler metric on $\mathbb{S}^3\times\mathbb{S}^3$ is the Hermitian metric associated to the usual Euclidean product metric on $\mathbb{S}^3\times\mathbb{S}^3$: 
\begin{align}\label{g}
g(Z,Z')=&\frac{1}{2}(\langle Z,Z' \rangle+ \langle JZ,JZ' \rangle)\\
	=&\frac{4}{3}(\langle U,U' \rangle+\langle V,V'\rangle)-\frac{2}{3}(\langle p^{-1}U,q^{-1}V' \rangle+\langle p^{-1}U',q^{-1}V \rangle),\nonumber
\end{align}
where $Z = (U, V )$ and $Z' = (U', V')$. In the first line $\langle \cdot,\cdot \rangle $ stands for the usual
Euclidean product metric on $\mathbb{S}^3\times\mathbb{S}^3$ and in the second line $\langle \cdot,\cdot \rangle $ stands for the usual Euclidean metric on $\mathbb{S}^3$. By definition, the almost complex structure is compatible with the metric $g$.\\
From \cite{complexsurfaces} we have the following lemma.
\begin{lemma} 
The Levi-Civita connection $\tilde\nabla$ on $\mathbb{S}^3\times \mathbb{S}^3$ with respect to the metric $g$ is given by 
\[ \begin{array}{ll}
\tilde\nabla_{\tilde E_i}\tilde E_j=-\varepsilon_{ijk}\tilde E_k & \tilde\nabla_{\tilde E_i}\tilde F_j=\frac{\varepsilon_{ijk}}{3}(\tilde E_k-\tilde F_k) \\
\tilde\nabla_{\tilde F_i}\tilde E_j=\frac{\varepsilon_{ijk}}{3}(\tilde F_k -\tilde E_k)& \tilde\nabla_{\tilde F_i}\tilde F_j=-\varepsilon_{ijk}\tilde F_k. 
 \end{array} \]
\end{lemma}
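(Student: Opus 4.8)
The plan is to exploit that $\mathbb{S}^3\times\mathbb{S}^3$ is the Lie group $SU(2)\times SU(2)$ and that the vector fields in \eqref{baza} are left-invariant, being the left translates of the Lie-algebra elements $(i,0),(j,0),-(k,0),(0,i),(0,j),-(0,k)$. Since left multiplication by a unit quaternion is an isometry of $\mathbb{H}\cong\mathbb{R}^4$, the first step is to read off the Gram matrix of this frame from the explicit formula \eqref{g}. Writing $\tilde E_i=(p\alpha_i,0)$ and $\tilde F_i=(0,q\alpha_i)$ with $\alpha_1=i$, $\alpha_2=j$, $\alpha_3=-k$, one has $p^{-1}U=q^{-1}V=\alpha_i$, so all position dependence cancels and
\begin{equation*}
g(\tilde E_i,\tilde E_j)=g(\tilde F_i,\tilde F_j)=\tfrac{4}{3}\delta_{ij},\qquad g(\tilde E_i,\tilde F_j)=-\tfrac{2}{3}\delta_{ij}.
\end{equation*}
The essential observation is that these coefficients are \emph{constant} on $\mathbb{S}^3\times\mathbb{S}^3$.

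With the Gram matrix in hand I would invoke the Koszul formula
\begin{equation*}
2g(\tilde\nabla_X Y,Z)=Xg(Y,Z)+Yg(X,Z)-Zg(X,Y)+g([X,Y],Z)-g([X,Z],Y)-g([Y,Z],X).
\end{equation*}
Because the Gram coefficients are constant, every term $Xg(Y,Z)$ with $X,Y,Z$ taken from the frame vanishes, and the formula collapses to the purely algebraic identity $2g(\tilde\nabla_X Y,Z)=g([X,Y],Z)-g([X,Z],Y)-g([Y,Z],X)$. The entire computation is thereby reduced to linear algebra in the Lie algebra, fed only by the brackets $[\tilde E_i,\tilde E_j]=-2\varepsilon_{ijk}\tilde E_k$, $[\tilde F_i,\tilde F_j]=-2\varepsilon_{ijk}\tilde F_k$ and $[\tilde E_i,\tilde F_j]=0$ recalled above.

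Next I would run through the four cases. For each ordered pair I evaluate the reduced right-hand side against every frame field $Z\in\{\tilde E_l,\tilde F_l\}$; for example, for $\tilde\nabla_{\tilde E_i}\tilde E_j$ the tests against $\tilde E_l$ and $\tilde F_l$ yield $-\tfrac{4}{3}\varepsilon_{ijl}$ and $\tfrac{2}{3}\varepsilon_{ijl}$ respectively. Writing $\tilde\nabla_{\tilde E_i}\tilde E_j=\sum_l(a_l\tilde E_l+b_l\tilde F_l)$ and pairing with $\tilde E_l$ and $\tilde F_l$ gives, for each index $l$, the coupled system
\begin{equation*}
\tfrac{4}{3}a_l-\tfrac{2}{3}b_l=-\tfrac{4}{3}\varepsilon_{ijl},\qquad -\tfrac{2}{3}a_l+\tfrac{4}{3}b_l=\tfrac{2}{3}\varepsilon_{ijl},
\end{equation*}
whose solution $a_l=-\varepsilon_{ijl}$, $b_l=0$ reproduces $\tilde\nabla_{\tilde E_i}\tilde E_j=-\varepsilon_{ijk}\tilde E_k$. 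The other three cases are identical, each time inverting the same constant matrix $\tfrac{1}{3}\bigl(\begin{smallmatrix}4&-2\\-2&4\end{smallmatrix}\bigr)$.

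The difficulty here is not conceptual but is a matter of bookkeeping. Because the $E$-block and the $F$-block are \emph{not} mutually orthogonal ($g(\tilde E_i,\tilde F_j)=-\tfrac{2}{3}\delta_{ij}\neq0$), the Koszul inner products are not themselves the connection coefficients, so one must solve the coupled $2\times2$ system for each index rather than reading the answer off directly. The other place where care is needed is the consistent handling of the totally antisymmetric symbol $\varepsilon_{ijk}$: keeping track of which permutation of the indices appears in each of the three Koszul terms is exactly where sign errors creep in, and I would verify each case by checking antisymmetry in the appropriate pair of indices.
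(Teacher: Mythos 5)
Your proof is correct, and it is worth noting at the outset that the paper itself offers no proof of this lemma: it is simply imported from the reference on almost complex surfaces in the nearly K\"ahler $\mathbb{S}^3\times\mathbb{S}^3$, so there is no argument in the present text to compare against line by line. Your route is a natural and self-contained one. The two observations that make it work are exactly the ones you isolate: first, that $\tilde E_i,\tilde F_i$ are left-invariant for the group structure of $\mathbb{S}^3\times\mathbb{S}^3$, so that the Gram matrix $g(\tilde E_i,\tilde E_j)=g(\tilde F_i,\tilde F_j)=\tfrac43\delta_{ij}$, $g(\tilde E_i,\tilde F_j)=-\tfrac23\delta_{ij}$ (which I have checked against \eqref{g}) is constant and the Koszul formula degenerates to its purely algebraic part; and second, that because the $E$-block and the $F$-block are not $g$-orthogonal, the Koszul pairings must be fed through the inverse of the $2\times2$ block $\tfrac13\bigl(\begin{smallmatrix}4&-2\\-2&4\end{smallmatrix}\bigr)$ rather than read off directly. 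Your sample computation for $\tilde\nabla_{\tilde E_i}\tilde E_j$ is numerically right ($-\tfrac43\varepsilon_{ijl}$ and $\tfrac23\varepsilon_{ijl}$ for the two pairings, giving $a_l=-\varepsilon_{ijl}$, $b_l=0$), and I have verified that the same scheme applied to $\tilde\nabla_{\tilde E_i}\tilde F_j$ yields pairings $\tfrac23\varepsilon_{ijl}$ and $-\tfrac23\varepsilon_{ijl}$, hence $a_l=\tfrac13\varepsilon_{ijl}$, $b_l=-\tfrac13\varepsilon_{ijl}$, i.e.\ $\tfrac{\varepsilon_{ijk}}{3}(\tilde E_k-\tilde F_k)$ as claimed; the remaining two cases follow by the evident symmetry exchanging the two factors. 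The only cosmetic criticism is that the proposal is written partly in the conditional (``I would run through the four cases''); to be a complete proof you should actually record the two Koszul pairings for at least the mixed case as well, since that is where the cancellation $[\tilde E_i,\tilde F_j]=0$ enters and where a sign slip in $\varepsilon_{ilj}$ versus $\varepsilon_{ijl}$ would be most likely to hide.
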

Then we have that 
\begin{equation}\label{nablaJ}
 \begin{array}{ll}
(\tilde\nabla_{\tilde E_i}J)\tilde E_j=-\frac{2}{3\sqrt{3}}\varepsilon_{ijk}(\tilde E_k+2\tilde F_k), &    (\tilde\nabla_{\tilde E_i}J)\tilde F_j=-\frac{2}{3\sqrt{3}}\varepsilon_{ijk} (\tilde E_k-\tilde F_k),\\
(\tilde\nabla_{\tilde F_i}J)\tilde  E_j=-\frac{2}{3\sqrt{3}} \varepsilon_{ijk} (\tilde E_k-\tilde F_k), & (\tilde\nabla_{\tilde F_i}J)\tilde F_j=-\frac{2}{3\sqrt{3}}\varepsilon_{ijk}(2\tilde E_k+\tilde F_k)
 \end{array} 
\end{equation}

Let  $G:=\tilde\nabla J$. Then $G$ is skew-symmetric and  satisfies that 
\begin{equation}
G(X,JY)=-JG(X,Y), \quad g(G(X,Y),Z)+g(G(X,Z),Y)=0,
\end{equation}
for any vectors fields $X,Y,Z$ tangent to $\mathbb{S}^3\times\mathbb{S}^3$.
Therefore, $\mathbb{S}^3\times\mathbb{S}^3$ equipped with $g$ and $J$, becomes a nearly K\"ahler manifold.\\
The almost product structure $P$ introduced in \cite{complexsurfaces} is defined as 
\begin{equation}\label{defP}
PZ=(pq^{-1}V,qp^{-1}U), \quad\forall Z=(U,V)\in T_{(p,q)}(\mathbb{S}^3\times\mathbb{S}^3) 
\end{equation}
plays an important role in the study of the Lagrangian submanifolds of the nearly K\"ahler $\mathbb{S}^3\times\mathbb{S}^3$. It has the following properties:
\begin{align*}
&P^2=Id \quad \text{($P$ is involutive)},\\
&PJ=-JP \quad \text{($P$ and $J$ anti-commute)},\\
&g(PZ,PZ')=g(Z,Z') \quad \text{($P$ is compatible with $g$)},\\
&g(PZ,Z')=g(Z,PZ') \quad \text{($P$ is symmetric)}.
\end{align*}
Moreover, the almost product structure $P$ can be expressed in terms of the usual product structure $QZ=Q(U,V)=(-U,V)$ and vice versa:
\begin{align*}
QZ &= \frac{1}{\sqrt{3}} (2 PJZ - JZ),\\
PZ &= \frac{1}{2}(Z-\sqrt{3}QJZ). 
\end{align*}
Next, we recall the relation between the Levi-Civita connections $\tilde\nabla$ of $g$ and $\nabla^E$ of the Euclidean product metric $\langle\cdot,\cdot\rangle$.
\begin{lemma}\cite{dioosvranckenwang}
The relation between the nearly K\"ahler connection $\tilde\nabla$ and the Euclidean connection $\nabla^E$is
$$
\nabla^E_XY=\tilde\nabla_XY+\frac{1}{2}(JG(X,PY)+JG(Y,PX)).
$$ 
\end{lemma}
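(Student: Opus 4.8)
The plan is to recognise that the asserted formula is an identity between tensors, so that it suffices to verify it on the global frame $\{\tilde E_1,\tilde E_2,\tilde E_3,\tilde F_1,\tilde F_2,\tilde F_3\}$ from \eqref{baza}. First I would note that both $\nabla^E$ and $\tilde\nabla$ are Levi-Civita connections, hence torsion-free, so their difference $D(X,Y):=\nabla^E_XY-\tilde\nabla_XY$ is a symmetric $(1,2)$-tensor field. The right-hand side $\tfrac12(JG(X,PY)+JG(Y,PX))$ is manifestly $C^\infty$-bilinear and symmetric in $X$ and $Y$, so the whole claim reduces to checking $D(X,Y)=\tfrac12(JG(X,PY)+JG(Y,PX))$ for $X,Y$ chosen among the frame vectors.

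Next I would assemble, on this frame, each of the objects appearing in the formula. The values of $\tilde\nabla$ are given by the preceding lemma and those of $G=\tilde\nabla J$ by \eqref{nablaJ}. From the definition \eqref{defP} of $P$ one computes directly that $P\tilde E_i=\tilde F_i$ and $P\tilde F_i=\tilde E_i$, i.e.\ $P$ interchanges the two families. From the definition of $J$ one likewise finds $J\tilde E_i=\tfrac{1}{\sqrt3}(-\tilde E_i-2\tilde F_i)$ and $J\tilde F_i=\tfrac{1}{\sqrt3}(2\tilde E_i+\tilde F_i)$. The only remaining ingredient is $\nabla^E$ itself.

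The key step is the evaluation of $\nabla^E$ on the frame. Here I would use that, for the Euclidean product metric, each factor $\mathbb{S}^3$ is a Lie group carrying a bi-invariant metric and that the vector fields $\tilde E_i$ (resp.\ $\tilde F_i$) are left-invariant on the first (resp.\ second) factor. Consequently $\nabla^E_{\tilde E_i}\tilde E_j=\tfrac12[\tilde E_i,\tilde E_j]=-\varepsilon_{ijk}\tilde E_k$ and $\nabla^E_{\tilde F_i}\tilde F_j=-\varepsilon_{ijk}\tilde F_k$, while the mixed covariant derivatives $\nabla^E_{\tilde E_i}\tilde F_j$ and $\nabla^E_{\tilde F_i}\tilde E_j$ vanish because the two factors are orthogonal in the product and $[\tilde E_i,\tilde F_j]=0$. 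Comparing with the preceding lemma, I observe that $\nabla^E$ and $\tilde\nabla$ agree on same-family pairs, so $D(\tilde E_i,\tilde E_j)=D(\tilde F_i,\tilde F_j)=0$, whereas on mixed pairs $D(\tilde E_i,\tilde F_j)=-\tilde\nabla_{\tilde E_i}\tilde F_j=-\tfrac{\varepsilon_{ijk}}{3}(\tilde E_k-\tilde F_k)$.

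Finally I would substitute the assembled data into the right-hand side. For a same-family pair such as $(\tilde E_i,\tilde E_j)$, the two terms $JG(\tilde E_i,P\tilde E_j)=JG(\tilde E_i,\tilde F_j)$ and $JG(\tilde E_j,P\tilde E_i)=JG(\tilde E_j,\tilde F_i)$ cancel after using $\varepsilon_{jik}=-\varepsilon_{ijk}$, reproducing $D=0$. For a mixed pair $(\tilde E_i,\tilde F_j)$ one has $P\tilde F_j=\tilde E_j$ and $P\tilde E_i=\tilde F_i$, so the right-hand side becomes $\tfrac12(JG(\tilde E_i,\tilde E_j)+JG(\tilde F_j,\tilde F_i))$; inserting \eqref{nablaJ} together with the images of $J$ computed above should reproduce $-\tfrac{\varepsilon_{ijk}}{3}(\tilde E_k-\tilde F_k)$. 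I expect the main obstacle to be purely computational: keeping track of the signs carried by $\varepsilon_{ijk}$ and by the $J$-images of $\tilde E_k,\tilde F_k$, together with the correct identification of $\nabla^E$ with the bi-invariant connection of each sphere. As a safeguard against sign errors, I would verify consistency with the skew-symmetry of $G$ and with the nearly K\"ahler identity $G(X,JY)=-JG(X,Y)$ before declaring the frame computation complete.
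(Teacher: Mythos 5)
The paper itself offers no proof of this lemma --- it is simply quoted from \cite{dioosvranckenwang} --- so your proposal can only be judged on its own merits. The strategy is sound: both connections are torsion-free, so their difference is a symmetric $(1,2)$-tensor, the right-hand side is manifestly tensorial and symmetric, and a check on the frame \eqref{baza} suffices. All the ingredients you assemble are correct: $P\tilde E_i=\tilde F_i$ and $P\tilde F_i=\tilde E_i$, the stated $J$-images, $\nabla^E_{\tilde E_i}\tilde E_j=\tfrac12[\tilde E_i,\tilde E_j]=-\varepsilon_{ijk}\tilde E_k$ by bi-invariance, the vanishing of the mixed Euclidean derivatives on the Riemannian product, and the cancellation on same-family pairs goes through exactly as you describe.

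The one computation you defer --- the mixed pair --- is the decisive one, and if you carry it out with \eqref{nablaJ} exactly as printed it fails: you obtain $\tfrac12\bigl(JG(\tilde E_i,\tilde E_j)+JG(\tilde F_j,\tilde F_i)\bigr)=-\tfrac13\varepsilon_{ijk}(\tilde E_k+\tilde F_k)$ instead of the required $D(\tilde E_i,\tilde F_j)=-\tfrac13\varepsilon_{ijk}(\tilde E_k-\tilde F_k)$. The culprit is a sign typo in the last entry of \eqref{nablaJ}: computing $(\tilde\nabla_{\tilde F_i}J)\tilde F_j=\tilde\nabla_{\tilde F_i}(J\tilde F_j)-J\tilde\nabla_{\tilde F_i}\tilde F_j$ directly from the connection table gives $+\tfrac{2}{3\sqrt{3}}\varepsilon_{ijk}(2\tilde E_k+\tilde F_k)$, not $-\tfrac{2}{3\sqrt{3}}\varepsilon_{ijk}(2\tilde E_k+\tilde F_k)$; equivalently, the corrected sign is forced by the identity $G(PX,PY)=-PG(X,Y)$. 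With that correction one finds $JG(\tilde E_i,\tilde E_j)=-\tfrac23\varepsilon_{ijk}\tilde E_k$ and $JG(\tilde F_j,\tilde F_i)=+\tfrac23\varepsilon_{ijk}\tilde F_k$, and the mixed pair closes, so the lemma is true and your method does prove it. To your credit, the safeguard you propose --- testing $G(X,JY)=-JG(X,Y)$ on the frame --- does detect this inconsistency, so be sure to actually perform it before substituting; note however that the skew-symmetry check alone will not catch it, since the factor $\varepsilon_{ijk}$ makes either choice of overall sign skew in $(i,j)$.
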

We recall here a useful formula, already known in  \cite{dioosvranckenwang}.\\
Let $D$ be the Euclidean connection on $\R^8$. For vector fields $X=(X_1,X_2)$ and $Y=(Y_1,Y_2)$ on $\nks$, we may decompose $D_XY$ along the tangent and the normal directions as follows: 
\begin{equation}\label{label}
D_XY=\nabla^E_XY+\frac{1}{2}\langle D_XY,(p,q) \rangle (p,q)+\frac{1}{2}\langle D_XY,(-p,q) \rangle (-p,q).
\end{equation}
Here, notice the factor $\frac{1}{2}$ due to the fact that  $(p,q)$ and $(-p,q)$ have  length $\sqrt{2}$. Moreover, as $\langle Y,(p,q)\rangle=0$, \eqref{label} is equivalent with
$$
D_XY=\nabla^E_XY-\frac{1}{2}\langle Y,X \rangle (p,q)-\frac{1}{2}\langle Y,(-X_1,X_2) \rangle (-p,q).
$$
In the special case that $Y_2=0$, the previous formula reduces to 
\begin{equation}\label{labe}
D_X(Y_1,0)=\nabla^E_X(Y_1,0)-\langle X_1, Y_1 \rangle(p,0).
\end{equation}\\
We find it appropriate here to prove an additional important  formula not explicitly mentioned in \cite{complexsurfaces}, that allows us to evaluate $G$  for any tangent vector fields.
\begin{proposition}
 Let $X=(p\alpha,q\beta),Y=(p\gamma,q\delta)\in T_{(p,q)}\nks$. Then 
\begin{align}\label{formulaG}
G(X,Y)=\frac{2}{3\sqrt{3}}(p(\beta\times\gamma+\alpha\times\delta&+\alpha\times\gamma-2\beta\times\delta),
	q(-\alpha\times\delta-\beta\times\gamma+2\alpha\times\gamma-\beta\times\delta)).
\end{align}
\end{proposition}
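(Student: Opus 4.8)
The plan is to exploit that $G=\tilde\nabla J$ is a genuine tensor field on $\nks$: although it is defined through the connection $\tilde\nabla$, the nearly K\"ahler condition makes $(X,Y)\mapsto(\tilde\nabla_XJ)Y$ skew-symmetric and, in particular, $\mathbb{R}$-bilinear and pointwise defined. Consequently it suffices to know $G$ on the frame $\{\tilde E_i,\tilde F_i\}$ and to extend by bilinearity, and these frame values are exactly the ones recorded in \eqref{nablaJ}. So the argument is essentially a bookkeeping computation: expand $X$ and $Y$ in the frame, apply \eqref{nablaJ}, and repackage the result in terms of cross products.

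First I would set $e_1=i$, $e_2=j$, $e_3=-k$, so that by \eqref{baza} the frame becomes $\tilde E_a=(pe_a,0)$ and $\tilde F_a=(0,qe_a)$. Writing the imaginary quaternions as $\alpha=\sum_a\hat\alpha_ae_a$ (and likewise for $\beta,\gamma,\delta$), the vectors decompose cleanly as $X=\sum_a(\hat\alpha_a\tilde E_a+\hat\beta_a\tilde F_a)$ and $Y=\sum_b(\hat\gamma_b\tilde E_b+\hat\delta_b\tilde F_b)$; note that, because of the signs in $\tilde E_3$ and $\tilde F_3$, the coordinate $\hat\alpha_3$ is the negative of the $k$-component of $\alpha$, and similarly for the others. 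Bilinearity then expresses $G(X,Y)$ as a sum of four double sums, over the blocks $(\tilde E,\tilde E)$, $(\tilde E,\tilde F)$, $(\tilde F,\tilde E)$ and $(\tilde F,\tilde F)$, each double sum carrying the coefficient pattern of the corresponding line of \eqref{nablaJ}.

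The final step is to recognise each $\varepsilon$-contraction as a cross product. The key identity is that in the frame $\{e_1,e_2,e_3\}=\{i,j,-k\}$ one has $e_a\times e_b=-\varepsilon_{abc}e_c$, so that $\sum_{a,b}\hat u_a\hat v_b\,\varepsilon_{abc}e_c=-\,u\times v$ for any imaginary quaternions $u,v$. Applying this to the four blocks turns the coefficient sums $\hat\alpha_a\hat\gamma_b$, $\hat\alpha_a\hat\delta_b$, $\hat\beta_a\hat\gamma_b$ and $\hat\beta_a\hat\delta_b$ into the four products $\alpha\times\gamma$, $\alpha\times\delta$, $\beta\times\gamma$ and $\beta\times\delta$, and collecting the $(p\,\cdot\,,0)$ and $(0,q\,\cdot\,)$ parts then produces \eqref{formulaG}.

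The main obstacle is the sign bookkeeping rather than anything conceptual. Because $\tilde E_3$ and $\tilde F_3$ are defined with a minus sign, the frame $\{i,j,-k\}$ is left-handed relative to the standard $\{i,j,k\}$, so the totally antisymmetric $\varepsilon_{abc}$-contraction differs by a sign from the naive cross product; carrying this sign correctly through each of the four blocks, together with the numerical coefficients $1$ and $2$ appearing in \eqref{nablaJ}, is where errors most easily arise. To control this I would cross-check the resulting expression against \eqref{nablaJ} on a handful of frame pairs, for instance evaluating both sides for $X=\tilde F_1,\ Y=\tilde F_2$ and for $X=\tilde E_1,\ Y=\tilde F_2$, before trusting the general formula.
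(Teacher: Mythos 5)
Your approach is essentially the paper's: both expand $X$ and $Y$ in the frame \eqref{baza}, evaluate $G$ block by block using bilinearity, and repackage the $\varepsilon$-contractions as cross products (the paper's $U_\alpha$ and $V_\beta$ are exactly your $\sum_a\hat\alpha_a\tilde E_a$ and $\sum_a\hat\beta_a\tilde F_a$, and your identity $\sum_{a,b}\hat u_a\hat v_b\,\varepsilon_{abc}e_c=-u\times v$ for the left-handed frame $\{i,j,-k\}$ is correct). The one substantive difference is how the $(\tilde F,\tilde F)$ block is obtained: the paper does not read it off from \eqref{nablaJ} but derives it from the $(\tilde E,\tilde E)$ block via $PU_\alpha=V_\alpha$ and the identity $G(PX,PY)=-PG(X,Y)$, getting $G(V_\alpha,V_\beta)=-\tfrac{2}{3\sqrt3}(V_{\alpha\times\beta}+2U_{\alpha\times\beta})$. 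This difference matters, because the fourth formula of \eqref{nablaJ} as printed carries a sign error: recomputing $(\tilde\nabla_{\tilde F_i}J)\tilde F_j$ directly from the Levi-Civita connection (or from the $P$-identity just mentioned) gives $(\tilde\nabla_{\tilde F_i}J)\tilde F_j=+\tfrac{2}{3\sqrt3}\varepsilon_{ijk}(2\tilde E_k+\tilde F_k)$. If you feed the printed sign into your four-block computation, the last block comes out as $+\tfrac{2}{3\sqrt3}(2U_{\beta\times\delta}+V_{\beta\times\delta})$ and your final formula acquires $+2\beta\times\delta$ and $+\beta\times\delta$ where \eqref{formulaG} has $-2\beta\times\delta$ and $-\beta\times\delta$. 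Your own proposed spot check on $X=\tilde F_1$, $Y=\tilde F_2$ would expose exactly this discrepancy; the resolution is that \eqref{formulaG} and the paper's intermediate formula for $G(V_\alpha,V_\beta)$ are the correct ones, and the printed line of \eqref{nablaJ} is not. Apart from this input-data issue, your derivation of the other three blocks reproduces the corresponding terms of \eqref{formulaG} and the argument is sound.
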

\begin{proof}
As $\alpha$ is an imaginary unit quaternion, we may write $\alpha=\alpha_1\cdot i+\alpha_2\cdot j+\alpha_3\cdot k$ and similarly for $\beta,\gamma,\delta$. Then, using \eqref{baza}, we write for more convenience in computations $X=U_{\alpha}+V_{\beta}$, where $U_{\alpha}=\alpha_1\tilde E_1+\alpha_2\tilde E_2-\alpha_3\tilde E_3$ and $V_{\beta}=\beta_1\tilde F_1+\beta_2\tilde F_2-\beta_3\tilde F_3$. Similarly, $Y=U_{\gamma}+V_{\delta}$. We now use the relations in \eqref{nablaJ} and compute
$$
G(U_{\alpha},V_{\beta})=\frac{2}{3\sqrt{3}}(U_{\alpha\times\beta}-V_{\alpha\times\beta}),\quad G(U_{\alpha},U_{\beta})=\frac{2}{3\sqrt{3}}(U_{\alpha\times\beta}+2V_{\alpha\times\beta}).
$$As $PU_{\alpha}=V_{\alpha}$, we obtain that 
$$
G(V_{\alpha},V_{\beta})=-\frac{2}{3\sqrt{3}}(V_{\alpha\times\beta}+2U_{\alpha\times\beta}).
$$Finally, by linearity we get the relation in \eqref{formulaG}.
\end{proof}

From now on we will restrict ourselves to $3$-dimensional Lagrangian submanifolds $M$ of $\mathbb{S}^3\times\mathbb{S}^3$.
It is known from \cite{dioosvranckenwang} and \cite{diooshuetal} that, as the pull-back of $T(\mathbb{S}^3\times \mathbb{S}^3)$ to $M$ splits into $TM\oplus JTM$, there are two  endomorphisms $A,B:TM\to TM$ such that the restriction $P|_{TM}$ of $P$ to the submanifold equals $A+JB$, that is $PX=AX+JBX,$ for all $X\in TM$. Note that the previous formula, together with the fact that $P$ and $J$ anti-commute, also determines $P$ on the normal space by $PJX=-JPX=BX-JAX.$  In addition, from the properties of $J$ and $P$ it follows that $A$ and $B$ are symmetric operators which commute and satisfy moreover that $A^2+B^2=Id$  (see \cite{dioosvranckenwang}). Hence $A$ and $B$ can be diagonalised simultaneously at a point $p$ in $M$ and there is an orthonormal basis $e_1, e_2,e_3\in T_pM$ such that 
\begin{equation}\label{Pei}
Pe_i=\cos (2\theta_i) e_i+\sin (2\theta_i) Je_i.
\end{equation}
The functions $\theta_i$ are called the angle functions of the immersion. 
Next, for a point $p$ belonging to an open dense subset of $M$ on which the multiplicities of the eigenvalues of $A$ and $B$ are constant (see \cite{29}),  we may extend the orthonormal basis $e_1,e_2,e_3$ to a frame on a neighborhood  in the Lagrangian submanifold. Finally, taking into account the properties of $G$ we know that there exists a local orthonormal frame $\{E_1,E_2,E_3\}$ on an open subset of $M$ such that  
\begin{equation}
AE_i=\cos(2\theta_i)E_i, \quad BE_i=\sin(2\theta_i)E_i
\end{equation}
and 
\begin{equation}\label{G}
\quad JG(E_i,E_j)=\frac{1}{\sqrt{3}}\varepsilon_{ijk}E_k.
\end{equation}
The following result is known (\cite{dioosvranckenwang}):
\begin{proposition}
The sum of the angles $\theta_1+\theta_2+\theta_3$ is zero modulo $\pi$.
\end{proposition}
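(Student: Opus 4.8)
The plan is to reduce everything to a single structural identity relating $G=\tilde\nabla J$ to the almost product structure $P$, namely
\[
G(PX,PY)=-P\,G(X,Y)
\]
for all tangent vectors $X,Y$ of $\nks$, and then to evaluate it on the adapted frame $\{E_1,E_2,E_3\}$. First I would establish this identity directly from the explicit formula \eqref{formulaG}. Writing $X=(p\alpha,q\beta)$ and $Y=(p\gamma,q\delta)$, the definition \eqref{defP} shows that $P$ merely interchanges the two imaginary-quaternion parts, $PX=(p\beta,q\alpha)$ and $PY=(p\delta,q\gamma)$. Substituting $\alpha\leftrightarrow\beta$ and $\gamma\leftrightarrow\delta$ into \eqref{formulaG} and comparing the result term by term with the image of $G(X,Y)$ under the same interchange of its two components yields exactly the sign-reversing identity above. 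This bookkeeping with the cross products is the one genuinely computational step, and I expect it to be the main (though routine) obstacle.

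Once the identity is available, I would apply it with $X=E_i$ and $Y=E_j$. From the defining property $G(X,JY)=-JG(X,Y)$ together with the skew-symmetry of $G$ one deduces $G(JX,Y)=-JG(X,Y)$ and $G(JX,JY)=-G(X,Y)$, so that \eqref{G} determines $G$ on every pair built from the $E_k$ and the $JE_k$. Expanding $PE_i$ via \eqref{Pei} and using the consequent relation $PJE_k=\sin(2\theta_k)E_k-\cos(2\theta_k)JE_k$, one finds, for distinct indices $i,j,k$,
\[
G(PE_i,PE_j)=\frac{1}{\sqrt3}\,\varepsilon_{ijk}\bigl(-\sin(2\theta_i+2\theta_j)\,E_k-\cos(2\theta_i+2\theta_j)\,JE_k\bigr),
\]
while the right-hand side of the identity becomes
\[
-P\,G(E_i,E_j)=\frac{1}{\sqrt3}\,\varepsilon_{ijk}\bigl(\sin(2\theta_k)\,E_k-\cos(2\theta_k)\,JE_k\bigr).
\]

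Finally, I would equate the $E_k$- and $JE_k$-components of these two expressions. This gives $\sin(2\theta_i+2\theta_j)=-\sin(2\theta_k)$ and $\cos(2\theta_i+2\theta_j)=\cos(2\theta_k)$, which together force $2\theta_i+2\theta_j\equiv-2\theta_k\pmod{2\pi}$, that is $2\theta_1+2\theta_2+2\theta_3\equiv0\pmod{2\pi}$, and hence $\theta_1+\theta_2+\theta_3\equiv0\pmod{\pi}$, as claimed. The only place where real care is needed is the derivation of the identity $G(PX,PY)=-P\,G(X,Y)$; after that, the argument is just a matter of substituting \eqref{Pei} and \eqref{G} and reading off the coefficients of $E_k$ and $JE_k$.
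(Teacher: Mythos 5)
Your argument is correct, and it is worth noting that the paper itself does not prove this proposition at all: it is quoted as a known result with a citation to the reference on Lagrangian submanifolds of the nearly K\"ahler $\nks$, so there is no internal proof to compare against. Your derivation is a legitimate, self-contained substitute built entirely from material available in the present paper. The key identity $G(PX,PY)=-P\,G(X,Y)$ does follow from \eqref{formulaG} exactly as you describe: writing $PX=(p\beta,q\alpha)$, $PY=(p\delta,q\gamma)$ and performing the swap $\alpha\leftrightarrow\beta$, $\gamma\leftrightarrow\delta$ in \eqref{formulaG}, the first slot becomes $\alpha\times\delta+\beta\times\gamma+\beta\times\delta-2\alpha\times\gamma$, which is precisely the negative of the second slot of $G(X,Y)$, and symmetrically for the other component, so the identity checks out term by term. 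The subsequent frame computation is also right: from $G(X,JY)=-JG(X,Y)$ and skew-symmetry one gets $G(JE_i,E_j)=-JG(E_i,E_j)$ and $G(JE_i,JE_j)=-G(E_i,E_j)$, which combined with $G(E_i,E_j)=-\tfrac{1}{\sqrt3}\varepsilon_{ijk}JE_k$ and the addition formulas yields $\cos(2\theta_i+2\theta_j)=\cos(2\theta_k)$ and $\sin(2\theta_i+2\theta_j)=-\sin(2\theta_k)$, hence $2(\theta_1+\theta_2+\theta_3)\equiv0\pmod{2\pi}$. This is essentially the standard route taken in the cited source; what your write-up buys the reader here is that the proposition becomes verifiable without leaving the paper, since \eqref{formulaG} was proved above precisely to make such evaluations of $G$ possible.
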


For the Levi-Civita connection $\nabla$ on M we introduce (see\cite{dioosvranckenwang}) the functions $\omega_{ij}^k$ satisfying 
$$\nabla_{E_i}E_j=\sum\limits_{k=1}^3\omega_{ij}^k E_k \quad \text{ and } \quad \omega_{ij}^k=-\omega_{ik}^j.$$ 
Also, we denote by $h_{ij}^k$ the components of the cubic form on $M$:
\begin{equation}\label{totallysym}
h_{ij}^k= g( h(E_i,E_j),JE_k).
\end{equation}

\section{Results}\label{Results}

\subsection {Elementary properties of orientable minimal surfaces in  $\mathbb{S}^3$.\\}\label{minimalsurface}

We recall some elementary properties of minimal surfaces. Let $p: S \to  \mathbb{S}^3\subset \mathbb{R}^4$ be an oriented minimal surface. We are going to check that the immersion either admits local isothermal coordinates for which the conformal factor  satisfies the Sinh-Gordon equation or is totally geodesic. 
First, we take isothermal coordinates $ u, v$ such that $\partial u, \partial v$ is positively oriented, $\langle \partial u, \partial u \rangle =\langle \partial v, \partial v \rangle= 2 e^\omega$  and $\langle \partial u, \partial v \rangle=0$ in  a neighborhood of a point of $S$. As  it is often more useful to use complex notation we write $z=u+Iv$ and consider $\partial z=\frac{1}{2}(\partial u-I \partial v)$ and $\partial \bar{z}=\frac{1}{2}(\partial u+ I \partial v)$. Note that we use $I$ here in order to distinguish between the $i,j,k$ introduced in the quaternions. We also extend everything in a linear way in $I$. This means that $\langle \partial z, \partial z \rangle=\langle \partial \bar z, \partial \bar z \rangle=0$ and  $\langle \partial z, \partial \bar z \rangle= e^\omega$. If we write $\partial u= p \alpha$ and $\partial v= p \beta$, the unit normal is given by $N = p \tfrac{\alpha \times \beta}{2 e^\omega}$. It is elementary to check that this is independent of the choice of complex coordinate and that the matrix $\Big( p\  \tfrac{\partial u}{\vert \partial u \vert} \  \tfrac{\partial v}{\vert \partial v \vert} \  N  \Big)$ belongs to $SO(4)$. We denote by $\sigma$ the component of the second fundamental form  in the direction of $N$.  
Remark that with this choice, the minimality of the surface implies $\sigma(\partial z,\partial \bar z)=0$ and we may determine the components of the connection $\nabla$ on the surface:
\begin{equation}
\nabla_{\partial z} {\partial z}=\omega_z \partial z, \quad \nabla_{\partial z} {\partial \bar{z}}=\nabla_{\partial {\bar z}}{\partial z}=0 \quad \text{and} \quad  \nabla_{\partial\bar{z}}=\omega_{\bar{z}}\partial{\bar{z}}. 
\end{equation}
The Codazzi equation of a surface in  $\mathbb{S}^3$ states that
$$
\nabla \sigma(\partial z,\partial{\bar{ z}},\partial z)=\nabla \sigma(\partial{\bar{ z}},\partial z,\partial z).
$$ So it follows that $\partial\bar{z}(\sigma(\partial z,\partial z))=0.$
Hence $\sigma(\partial z,\partial z)$ is a holomorphic function.  Then we have two cases:\\
Case 1.
If $\sigma(\partial z, \partial z)=0$ on an open set, then by conjugation $\sigma(\partial {\bar{z}}, \partial \bar{ z})=0$ and therefore, using the analyticity of a minimal surface, $ \sigma=0$ everywhere.\\
Case 2.
If $\sigma(\partial z,\partial z)\neq 0$, then there exists  a function $g(z)$ such that $\sigma(\partial z,\partial z)=g(z)$. Away from isolated points we can always make a change of coordinates if necessary such that $\sigma(\partial z,\partial z)=-1$. Notice that by conjugation we get also $\sigma(\partial \bar z,\partial\bar z)=-1$.
Such a change of coordinates is unique up to translations and replacing $z$ by $-z$. \\
Next, given the immersions $p: S \rightarrow  \mathbb{S}^3(1)  \overset{i}{\hookrightarrow} \mathbb{R}^4$, from the Gauss formula we obtain:
\begin{align}\label{secondder}
&p_{zz}=\omega_z p_z-N,\nonumber\\
&p_{z\bar z}=-e^{\omega}p,\\
&p_{\bar z \bar z}=\omega_{\bar z} p_{\bar z}-N,\nonumber
\end{align}
where $N$ is the normal on  $\mathbb{S}^3$ and $N_z=e^{-\omega}p_{\bar z}$, $N_{\bar z}=e^{-\omega}p_z.$ Therefore
$$
p_{zz\bar z}=(\omega_{z\bar z}-e^{-\omega})p_z-\omega_z e^\omega p,\quad p_{z\bar z z}=-e^\omega \omega_z p-e^\omega p_z,
$$
which shows that $\omega$ satisfies 
\begin{align}\label{lap}
&\omega_{z\bar z}=-2 \sinh \omega\quad \Longleftrightarrow\nonumber\\
&\Delta \omega=-8\sinh \omega \quad \text{(Sinh-Gordon equation)}.
\end{align}
Notice that by $\Delta \omega$ we denote the Euclidean Laplacian of $\omega$ in $\R^2=\C$.\\
Let $\mathcal P$ be the lift of the minimal immersion to the immersion of the frame bundle in $SO(4)$, i.e.
$$\mathcal P : US\rightarrow SO(4): w \mapsto ( p \ w\ \tilde{J}w \  N ),$$
where $US$ denotes the unit tangent bundle of $S$ and $\tilde J$ denotes the natural complex structure on an orientable surface. In terms of our chosen isothermal coordinate this map can be parametrised by
$$
\mathcal P(u,v,t)=\left( p(u,v), \cos t\frac{p_u}{\mid p_u\mid}+\sin t\frac{p_v}{\mid p_v\mid}, -\sin t\frac{p_u}{\mid p_u\mid}+\cos t\frac{p_v}{\mid p_v\mid} , N (u,v)\right),
$$ for some real parameter $t$.
Note that we have the frame equations which state that
$$d \mathcal P =\mathcal P \Omega^t=-\mathcal  P \Omega, $$
where in terms of the coordinates $u$, $v$ and $t$ the matrix $\Omega$ is given 
by
\[
\begin{pmatrix}
0 &\sqrt{2}e^{\frac{\omega}{2}}(\cos (t) du+\sin (t) dv)&  \sqrt{2}e^{\frac{\omega}{2}}(\cos (t) dv-\sin (t) du)&0 \\
 \begin{tabular}{@{}c@{}}$- \sqrt{2}e^{\frac{\omega}{2}}(\cos (t) du+$\\$+\sin (t) dv)$\end{tabular}  & 0& \frac{1}{2}(\omega_u dv-\omega_v du)+dt &\begin{tabular}{@{}c@{}}$ -\sqrt{2}e^{-\frac{\omega}{2}}(\cos (t) du-$\\$-\sin (t) dv) $\end{tabular} \\
 \begin{tabular}{@{}c@{}}$-\sqrt{2}e^{\frac{\omega}{2}}(\cos (t) dv-$\\$-\sin (t) du)$\end{tabular}&   -\frac{1}{2}(\omega_u dv-\omega_v du)-dt& 0&   \begin{tabular}{@{}c@{}}$ \sqrt{2}e^{-\frac{\omega}{2}}(\sin (t) du+$\\$+\cos (t) dv)$\end{tabular}\\
0&\sqrt{2}e^{-\frac{\omega}{2}}(\cos (t) du-\sin (t) dv) &-\sqrt{2}e^{-\frac{\omega}{2}}(\sin (t) du+\cos (t) dv)&0\\
\end{pmatrix}. \]

\subsection{From the Lagrangian immersion to the minimal surface}\label{subs}\hfill \break

Now we will consider Lagrangian submanifolds in the nearly K\"ahler  $\mathbb{S}^3 \times \mathbb{S}^3$. We write 
the Lagrangian submanifold $M$  as 
\begin{align*}
&f:M\to \mathbb{S}^3\times\mathbb{S}^3\\
&x\mapsto f(x)=(p(x),q(x)),
\end{align*}
and we assume that the first component has nowhere maximal rank.
We have the following:
\begin{theorem}\label{th1}
 Let \begin{align*}
&f:M\to \mathbb{S}^3\times\mathbb{S}^3\\
&x\mapsto f(x)=(p(x),q(x)),
\end{align*}
be a Lagrangian immersion such that $p:M \rightarrow  \mathbb{S}^3$ has nowhere  maximal rank.
Then $\tfrac \pi 3$ is an angle function up to a multiple of $\pi$. 
The converse is also true.
\end{theorem}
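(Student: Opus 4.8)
The plan is to translate the rank condition on $p$ into the existence of a tangent vector of the special form $(0,q\beta)$, and then to recognise any such vector as an eigendirection forcing the angle $\tfrac{\pi}{3}$. First I would record two elementary computations. Writing a tangent vector of $\nks$ as $(p\alpha,q\beta)$ with $\alpha,\beta$ imaginary quaternions, the definition \eqref{defP} gives $P(p\alpha,q\beta)=(p\beta,q\alpha)$, since $pq^{-1}(q\beta)=p\beta$ and $qp^{-1}(p\alpha)=q\alpha$; the definition of $J$ gives, for the degenerate case, $J(0,q\beta)=\tfrac{1}{\sqrt3}(2p\beta,q\beta)$.

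Next I would reformulate the hypothesis. Since $\dim M=\dim\mathbb{S}^3=3$, the differential $dp$ fails to have maximal rank at a point exactly when $\ker dp\neq\{0\}$ there. Writing $df(X)=(p\,\alpha(X),q\,\beta(X))$, a nonzero $X$ lies in $\ker dp$ iff $\alpha(X)=0$, that is, iff $df(X)=(0,q\beta)$ with $\beta\neq0$ (the immersion property forces $\beta\neq0$). Hence $p$ has nowhere maximal rank iff at each point of $M$ there is a nonzero tangent vector of the form $(0,q\beta)$.

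The heart of the argument is that such a vector is automatically a $\theta=\tfrac{\pi}{3}$ eigendirection. Using the two formulas above I would verify, for $X=(0,q\beta)\in T_xM$,
\[
-\tfrac12 X+\tfrac{\sqrt3}{2}\,JX=(p\beta,0)=PX .
\]
Because $M$ is Lagrangian, $PX=AX+JBX$ with $AX,BX\in TM$ while $JX$ lies in the normal space $JTM$. Reading the identity as $PX=-\tfrac12 X+J\bigl(\tfrac{\sqrt3}{2}X\bigr)$ and using that $TM$ and $JTM$ are complementary then yields $AX=-\tfrac12 X$ and $BX=\tfrac{\sqrt3}{2}X$. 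Thus $X$ is a common eigenvector of $A$ and $B$ with $\cos 2\theta=-\tfrac12$ and $\sin 2\theta=\tfrac{\sqrt3}{2}$, so $\theta=\tfrac{\pi}{3}$ up to a multiple of $\pi$; in particular $\ker dp$ sits inside the corresponding eigenspace and $\tfrac{\pi}{3}$ is one of the angle functions.

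For the converse I would run this computation backwards. If $\theta_1=\tfrac{\pi}{3}$, then \eqref{Pei} reads $PE_1=-\tfrac12 E_1+\tfrac{\sqrt3}{2}JE_1$; writing $E_1=(p\alpha,q\beta)$ and substituting $P(p\alpha,q\beta)=(p\beta,q\alpha)$ together with the expression for $JE_1$, comparison of the two $\mathbb{S}^3$-components forces $\alpha=0$. Hence $dp(E_1)=0$, so $dp$ drops rank wherever the eigenframe is defined, and by continuity (the angle functions being continuous on the open dense set of constant eigenvalue multiplicities) $p$ has nowhere maximal rank. I expect the only delicate points to be bookkeeping: that non-maximal rank is equivalent to a nontrivial kernel (rank–nullity in equal dimensions), and that one may legitimately read off $A$ and $B$ from $PX=AX+JBX$ via the $TM\oplus JTM$ splitting. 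The eigenvalue recognition itself is immediate once the identity $PX=-\tfrac12 X+\tfrac{\sqrt3}{2}JX$ is established.
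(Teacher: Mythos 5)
Your argument is correct and is essentially the paper's proof: both identify $\ker dp$ with the joint eigendirection of $A$ and $B$ for the eigenvalues $-\tfrac12$ and $\tfrac{\sqrt3}{2}$ by decomposing $PX$ along $TM\oplus JTM$, the only cosmetic difference being that the paper phrases the computation via the identity $QX=\tfrac{1}{\sqrt3}(2PJX-JX)$ while you verify $PX=-\tfrac12X+\tfrac{\sqrt3}{2}JX$ directly in quaternionic coordinates. The converse and the rank--nullity bookkeeping match the paper's reasoning as well.
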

\begin{proof} It is clear that $p$  has nowhere maximal rank if and only if there exists a non zero vector field $X$ such that $dp(X)=0$.
As usual we identify $df(X)$ with $X$, so we have that $X=df(X)= (dp(X),dq(X))$
and $QX= (-dp(X),dq(X))$. Therefore $p$ has nowhere maximal rank if and only if
\begin{align*}
X&=QX\\
&=\tfrac 1 {\sqrt{3}} (2 PJ X-JX)\\
&=\tfrac 1 {\sqrt{3}}(2 BX -2 JAX-JX).
\end{align*}
Comparing tangent and normal components we see that this is the case if and
only if
\begin{align*}
&AX= -\tfrac 12 X
&BX=\frac{\sqrt{3}} 2 X.
\end{align*}
So we see that $X$ is an eigenvector of both $A$ and $B$ and that the corresponding angle function is $\tfrac \pi 3$ (up to a multiple of $\pi$). 
\end{proof}
For the remainder of the paper we will consider Lagrangian immersions for which 
the map $p$ has nowhere maximal rank. In view of the previous lemma this means 
that one of the angle functions is constant, namely $\theta_1=\frac{\pi}{3}$. Then  using that the angles are only determined up to a multiple of $\pi$ and given that $2\theta_1+2\theta_2+2\theta_3$ is a multiple of $2\pi $, we may write
\begin{align}\label{unghi}
\begin{array}{l}
2\theta_1=\frac{2\pi}{3},\\
2\theta_2=2\Lambda+\frac{2\pi}{3},\\
2\theta_3=-2\Lambda+\frac{2\pi}{3},
\end{array}
\end{align}
for $\Lambda$ an arbitrary function. 
If necessary by interchanging $E_2,E_3$ with $-E_3,E_2$ we may assume that $\Lambda \ge 0$. Similarly if necessary interchanging $E_1,E_3$ by $-E_1,-E_3$ we may also assume that $h_{13}^3 \le 0$ (see equation \eqref{totallysym}). \\
In case that $\Lambda$ is $0$ or $\frac{\pi}{2}$  modulo $\pi$, we have that at least two of the angle functions coincide. It then follows from \cite{diooshuetal} that the Lagrangian submanifold is totally geodesic with angle function $\tfrac{\pi}{3}$ and that the immersion is congruent with  $f\colon \mathbb{S}^3\to \nks: u\mapsto (1,u)$.  Therefore in the remainder of this paper we will always assume that $ \Lambda \in(0,\frac{\pi}{2})$. \\
Notice that the case when $\Lambda$ is constant is treated in  \cite{alphaconstan-paperBurcu}, where it is shown that in that case $M$ is congruent with 
one of the following two immersions:
\begin{theorem}
Let~$M$ be a Lagrangian submanifold  of constant  sectional curvature in the nearly K\"ahler~$\nks$. If $M$ is not totally geodesic, 
then up to an isometry of
the nearly K\"ahler~$\nks$, ~$M$ is locally congruent with  one of the following
immersions:
\begin{enumerate}
\item  $f\colon \mathbb{S}^3\to \nks: u\mapsto (uiu^{-1},uju^{-1})$, 
\item $f: \mathbb R^3\to \nks: (u,v,w)\mapsto (p(u,w),q(u,v))$, where $p$ and $q$ are constant mean curvature tori in $\mathbb{S}^3$ given by
\begin{align*}
p(u,w)&=\left (\cos u \cos w,\cos u \sin w,\sin u \cos w,\sin u \sin w\right),\\
  q(u,v)&=\frac{1}{\sqrt{2}}\left (\cos v \left(\sin u+\cos u\right),\sin
   v \left(\sin u+\cos u\right)\right . ,\\
   &\qquad \left .  \cos v \left(\sin u-\cos u\right),\sin v \left(\sin
   u-\cos u\right) \right).
\end{align*} 
 \end{enumerate}
\end{theorem}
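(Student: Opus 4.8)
The plan is to classify these submanifolds through the Gauss and Codazzi equations in the adapted frame, and then to integrate the resulting structure equations to produce the two explicit models. Write $c$ for the constant sectional curvature and work with the orthonormal frame $\{E_1,E_2,E_3\}$ of \eqref{Pei} and \eqref{G}, with angle functions $\theta_1,\theta_2,\theta_3$, connection coefficients $\omega_{ij}^k$, and totally symmetric cubic form $h_{ij}^k$ as in \eqref{totallysym}. The starting point is the Gauss equation
\begin{equation*}
g(R(E_i,E_j)E_k,E_l)=g(\tilde R(E_i,E_j)E_k,E_l)+\sum_{m}\bigl(h_{il}^m h_{jk}^m-h_{ik}^m h_{jl}^m\bigr),
\end{equation*}
where the curvature tensor $\tilde R$ of the nearly K\"ahler $\nks$ is explicitly known from \cite{dioosvranckenwang} and, evaluated on the frame, reduces to a fixed expression in $\cos 2\theta_i$ and $\sin 2\theta_i$. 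Imposing constant curvature, namely $g(R(E_i,E_j)E_j,E_i)=c$ for $i\neq j$ and $g(R(E_i,E_j)E_k,E_l)=0$ in all remaining cases, turns this into a system of algebraic relations tying the $h_{ij}^k$ to trigonometric functions of the angle functions.

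Next I would bring in the Codazzi equation. Since $\tilde\nabla J=G\neq 0$, it carries correction terms built from $G$, which I evaluate using \eqref{formulaG} and \eqref{G}. Combined with the first-order equations governing the derivatives $E_i(\theta_j)$ (obtained by differentiating the defining relation \eqref{Pei} and inserting the known covariant derivative of $P$), these identities let me differentiate the Gauss relations and feed the outcome back into the system. The key intermediate claim, which I expect to be the technical heart of the argument, is that constant sectional curvature together with the non-totally-geodesic hypothesis forces all three angle functions to be constant; establishing this means disentangling the coupled Gauss--Codazzi system carefully enough to exclude every branch with non-constant angles.

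Once the angle functions are constant, the integrability conditions among the $\omega_{ij}^k$ and the $h_{ij}^k$ collapse to finitely many admissible configurations, of which I expect exactly two non-totally-geodesic branches to survive: a nondegenerate one with $c>0$ and a rigid, essentially parallel cubic form, and a degenerate one in which the second fundamental form decomposes and the Gauss equation forces $c=0$. For each surviving configuration I would then integrate the frame equations: using the lemma relating $\tilde\nabla$ to the Euclidean connection $\nabla^E$ together with \eqref{label} and \eqref{labe}, the immersion $f=(p,q)$ satisfies an explicit linear second-order system in $\R^8$, and solving it under the unit-quaternion constraints yields $p$ and $q$ in closed form. Matching initial data up to an isometry of $\nks$ identifies the $c>0$ branch with $u\mapsto(uiu^{-1},uju^{-1})$ and the $c=0$ branch with the product-of-tori immersion in statement~(2). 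The main obstacle is the second step: the closed-form integration here is lengthy but routine once a configuration is fixed, whereas proving that the system admits no solution with non-constant angle functions is where the genuine difficulty lies.
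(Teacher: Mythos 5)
This theorem is not proved in the paper at all: it is imported from the literature. The authors introduce it with the remark that the case of constant $\Lambda$ ``is treated in \cite{alphaconstan-paperBurcu}'' and then note that the two listed immersions are precisely the constant-sectional-curvature examples obtained in \cite{dioosvranckenwang}; the statement is only used afterwards, the two immersions reappearing as special solutions in Cases 2 and 3 of Section \ref{section3.3}. So there is no internal proof to compare your attempt against, and the fair comparison is with the external reference, whose overall strategy --- Gauss and Codazzi equations in the adapted frame $E_1,E_2,E_3$ with angle functions, followed by integration of the frame equations --- is indeed the one you sketch.

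Judged as a proof, though, your text is a programme rather than an argument, and the gap sits exactly where you place it. The claim that constant sectional curvature plus non-total-geodesy forces all three angle functions to be constant is the entire technical content of the classification, and you give no mechanism for it: you do not say which components of the Gauss equation you combine, how the derivatives $E_i(\theta_j)$ are eliminated (note that by \eqref{opA}--\eqref{opB} these derivatives are themselves components of the cubic form, e.g.\ $E_1(\Lambda)=h_{13}^3$, so ``differentiating the Gauss relations and feeding the outcome back'' produces a genuinely coupled algebraic--differential system whose branches must each be excluded by hand), nor why no branch with non-constant angles survives. The subsequent assertion that exactly two non-totally-geodesic configurations remain, one with $c>0$ and one with $c=0$, is stated as an expectation rather than derived, even though it is consistent with the known answer (example (1) is positively curved, example (2) is the flat torus). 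The final integration step is sound and matches what the present paper actually does in its reverse constructions (solving $X_i(p)=p\alpha_i$ and $X_i(q)=q\beta_i$ for quaternionic data), but until the reduction to constant angles and to the two admissible algebraic configurations is actually carried out, the classification is not established.
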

Note that these are precisely the two Lagrangian immersions with constant sectional curvature obtained is \cite{dioosvranckenwang}. These two examples will appear as special solutions in respectively Case 2 and Case 3. However we will mainly focus on the case that $\Lambda$ is not constant. \\
In the following, we will  identify a tangent vector $X$ in $T_xM$ with its image through $df$ in $T_{(p,q)}\mathbb{S}^3\times \mathbb{S}^3$, that is $X\equiv df(X)=(dp(X),dq(X))$, and we can write $QX\equiv Q(df(X))=(-dp(X),dq(X))$. Therefore, if we see $dp(X)$  projected on the first factor of $ \mathbb{S}^3\times  \mathbb{S}^3$ , that is $dp(X)\equiv (dp(X),0)$, we can write 
\begin{equation}
\label{dp}
dp(X)=\frac{1}{2}(X-QX).
\end{equation}
We use relations \eqref{Pei} and \eqref{unghi} to compute $PE_1=-\frac{1}{2}E_1+\frac{\sqrt{3}}{2}JE_1$. As mentioned before this is equivalent with stating that
$dp(E_1)= 0$ and that $p$ has nowhere maximal rank.
By straightforward computations we obtain 
\begin{equation} 
\label{15bis}
\begin{array}{l}
(dp(E_2),0)=\left(\frac{1}{2}-\frac{1}{\sqrt{3}}\sin(2\Lambda+\frac{2\pi}{3})\right )E_2+\frac{1}{\sqrt{3}}\left (\frac{1}{2}+\cos(2\Lambda+\frac{2\pi}{3})\right )JE_2,\\
(dp(E_3),0)=\left(\frac{1}{2}-\frac{1}{\sqrt{3}}\sin(-2\Lambda+\frac{2\pi}{3})\right )E_3+\frac{1}{\sqrt{3}}\left (\frac{1}{2}+\cos(-2\Lambda+\frac{2\pi}{3})\right )JE_3
 \end{array} \end{equation}
and 
\begin{align}
\label{length}
&\langle dp(E_2),dp(E_2)\rangle=\sin^2\Lambda,\nonumber\\
&\langle dp(E_3),dp(E_3)\rangle=\sin^2\Lambda,\\
&\langle dp(E_2),dp(E_3)\rangle=0.\nonumber
\end{align}
We denote
\begin{align}\label{vxi}
&v_2:=dp(E_2)\equiv(dp(E_2),0),\nonumber\\
&v_3:=dp(E_3)\equiv(dp(E_3),0),\\
&\xi=\frac{1}{\sqrt{3}}E_1-JE_1\nonumber
\end{align}
and we may easily see that $Q\xi=-\xi$, i.e. $\xi$ lies entirely on the first factor of $\mathbb{S}^3\times \mathbb{S}^3$. Moreover, 
 $\langle v_i,v_j\rangle=\delta_{ij}\sin\Lambda$, $\langle \xi,v_2 \rangle=\langle \xi,v_3 \rangle=0 $ and  $\langle \xi,\xi \rangle=1$. Therefore, $p(M)$ is a surface in $\mathbb{S}^3$ and $\xi$ can be seen as a unit normal to the surface. \\
As far as the Lagrangian immersion itself is concerned we also have due to the minimality that
\begin{align}
\label{minimality}
\begin{array}{l}
h_{11}^1+h_{12}^2+h_{13}^3=0,\\
h_{11}^2+h_{22}^2+h_{23}^3=0,\\
h_{11}^3+h_{22}^3+h_{33}^3=0.
\end{array}
\end{align}
From \cite{dioosvranckenwang} we know that the covariant derivatives of the endomorphisms $A$ and $B$ are
\begin{align}
(\nabla_XA)Y&=BS_{JX}Y-Jh(X,BY)+\frac{1}{2}(JG(X,AY)-AJG(X,Y)),\label{opA}\\
(\nabla_XB)Y&=-AS_{JX}Y+Jh(X,AY)+\frac{1}{2}(JG(X,AY)-AJG(X,Y)).\label{opB}
\end{align}
We are going to use the  definition of $\nabla A$ and $\nabla B$  in the previous expressions and then evaluate them for different vectors in the basis  in order to get information about the functions $\omega_{ij}^k$ and $h_{ij}^k$. For $X=Y=E_1$ in \eqref{opA} we obtain that 
\begin{align}\label{x25}
\begin{array}{l}
h_{12}^2=-h_{13}^3,\\
\omega_{11}^2=h_{11}^2 \cot\Lambda,\\
\omega_{11}^3=-h_{11}^3 \cot\Lambda.
\end{array}
\end{align}
If we take $X=E_1$ and $Y=E_2$ in \eqref{opA} and \eqref{opB}, we see that 
\begin{align}
&E_1(\Lambda)=h_{13}^3,\\
&\omega_{12}^3=\frac{\sqrt{3}}{6}-h_{12}^3\cot{2\Lambda}
\end{align}
and, for $X=E_2$ and $Y=E_1$ in \eqref{opA}, we obtain
\begin{align}
h_{11}^2&=0,\label{x28}\\
\omega_{21}^2&=-\cot\Lambda h_{13}^3,\\
\omega_{21}^3&=-\frac{\sqrt{3}}{6}-h_{12}^3 \cot\Lambda.
\end{align}
Then we choose successively $X=E_3,Y=E_1$, $X=E_2,Y=E_3$ and $X=E_3,Y=E_2$    in relations \eqref{opA} and \eqref{opB}
and obtain
\begin{align}
h_{11}^3&=0,\label{x31}\\
\omega_{31}^2&=\frac{\sqrt{3}}{6}+\cot\Lambda h_{12}^3,\\
\omega_{31}^3&=-\cot\Lambda h_{13}^3,\\
\omega_{22}^3&=-\cot 2\Lambda h_{22}^3,\\
\omega_{32}^3&=-\cot 2\Lambda h_{23}^3\\
E_2(\Lambda)&=h_{23}^3,\label{dere2}\\
E_3(\Lambda)&=-h_{22}^3\label{dere3}.
\end{align}
We can easily see from \eqref{x25}, \eqref{x28} and \eqref{x31} that 
$$
\omega_{11}^2=0\quad \text{and} \quad \omega_{11}^3=0 $$
and, if we consider as well the relations in \eqref{minimality}, we have that
$$
h_{33}^3=-h_{22}^3,\quad h_{11}^1=0\quad  \text{and} \quad h_{22}^2=-h_{23 }^3.
$$
Later on we will also need to study the Codazzi equations  for $M$. From \cite{dioosvranckenwang} we know their general form:
\begin{align}
\nabla h(X,Y,Z)-\nabla h(Y,X,Z)=&\frac{1}{3}(g(AY,Z)JBX-g(AX,Z)JBY\nonumber\\
	&-g(BY,Z)JAX+g(BX,Z)JAY ).
\end{align}
We are going to use the definition for $\nabla h$ in the previous relation and take different values for the vectors $X,Y$ and $Z$. Thus, we evaluate it successively for $E_1,E_2,E_1$; $E_1,E_2,E_2$; $E_1,E_3,E_3$; $E_1,E_3,E_2$ and $E_2,E_3,E_3$ and we obtain the following relations, respectively:
\begin{align}
\label{cdz0}
\begin{array}{l}
E_1(h_{13}^3)=\frac{1}{3}(-\sqrt{3}h_{12}^3+6(h_{13}^3)^2\cot\Lambda-6(h_{12}^3)^2\csc(2\Lambda)+\sin(2\Lambda)),\\
E_1(h_{12}^3)=\frac{1}{3}h_{13}^3(\sqrt{3}+9h_{12}^3\cot\Lambda+3h_{12}^3\tan\Lambda), 
\end{array}
\end{align}

\begin{align}
\label{cdz}
E_2(h_{13}^3)-E_1(h_{23}^3)=&\frac{1}{\sqrt{3}}h_{22}^3+h_{12}^3h_{22}^3\cot\Lambda-
h_{13}^3 h_{23}^3\cot\Lambda-h_{12}^3 h_{22}^3\cot(2\Lambda),\nonumber\\
E_1(h_{22}^3)-E_2(h_{12}^3)=&h_{13}^3h_{22}^3(2\cot\Lambda-\tan\Lambda)
+\frac{1}{6}h_{23}^3(2\sqrt{3}-3h_{12}^3\cot\Lambda+9h_{12}^3\tan\Lambda),\nonumber\\
E_3(h_{12}^3)-E_1(h_{23}^3)=&\frac{1}{\sqrt{3}}h_{22}^3+(h_{12}^3h_{22}^3-h_{13}^3h_{23}^3)\cot\Lambda
-(3h_{12}^3h_{22}^3+2 h_{13}^3h_{23}^3)\cot(2\Lambda),\nonumber\\
E_3(h_{13}^3)+E_1(h_{22}^3)=&\frac{1}{\sqrt{3}}h_{23}^3+h_{13}^3h_{22}^3\cot\Lambda+h_{12}^3h_{23}^3\cot\Lambda
-h_{12}^3h_{23}^3\cot(2\Lambda),\\
E_2(h_{13}^3)-E_3(h_{12}^3)=&2(h_{12}^3h_{22}^3+h_{13}^3h_{23}^3)\cot(2\Lambda),\nonumber\\
E_3(h_{22}^3)-E_2(h_{23}^3)=&-\frac{1}{2}(8(h_{12}^3)^2+4(h_{13}^3)^2+3((h_{22}^3)^2+(h_{23}^3)^2))\cot\Lambda- \nonumber\\
&-\frac{1}{3}(\sqrt{3}h_{12}^3+\sin{4\Lambda})+\frac{3}{2}((h_{22}^3)^2+(h_{23}^3)^2)\tan\Lambda,\notag \nonumber\\
E_2(h_{22}^3)+E_3(h_{23}^3)=&-\frac{1}{3}h_{13}^3(\sqrt{3}+6h_{12}^3\cot\Lambda). \nonumber
\end{align}
\begin{theorem} \label{th3}
 Let \begin{align*}
&f:M\to \mathbb{S}^3\times\mathbb{S}^3\\
&x\mapsto f(x)=(p(x),q(x)),
\end{align*}
be a Lagrangian immersion such that $p:M \rightarrow  \mathbb{S}^3$ has nowhere maximal rank.  
Assume that $M$ is not totally geodesic. Then $p(M)$ is a (branched) minimal surface in  $\mathbb{S}^3$.
Moreover
$$
\tilde{P}:M\rightarrow SO(4):\ x\mapsto\left( p(x)\quad\frac{v_2}{\sin\Lambda} \quad \frac{v_3}{\sin\Lambda} \quad \xi \right),
$$ where $v_2,v_3$ and $\xi$ are defined by \eqref{vxi}, is a map which is contained into the frame bundle over the minimal surface $p$.
\end{theorem}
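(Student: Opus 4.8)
The plan is to exhibit $\tilde P$ as the frame-bundle lift of the immersion $p$, and then to read off minimality from the shape of the connection matrix $\tilde P^{-1}\,d\tilde P$. I would organise the argument in three steps, keeping in mind that all four columns of $\tilde P$ lie in the first factor $\R^4$.

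\emph{Step 1: $\tilde P$ takes values in $SO(4)$.} The column $p$ is the position vector on $\mathbb{S}^3$, whereas $v_2,v_3,\xi$ are tangent to $\mathbb{S}^3$ at $p$ and hence orthogonal to $p$. By \eqref{length} the vectors $v_2/\sin\Lambda$ and $v_3/\sin\Lambda$ are orthonormal, and by the orthogonality relations stated after \eqref{vxi} the vector $\xi$ is a unit vector orthogonal to both. Thus the four columns form an orthonormal basis of $\R^4$, so $\tilde P\in O(4)$; a check of the orientation, using the sign normalizations fixed after \eqref{unghi}, gives $\det\tilde P=1$ and hence $\tilde P\in SO(4)$.

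\emph{Step 2: geometric set-up and frame-bundle membership.} By \eqref{Pei}--\eqref{unghi} one has $dp(E_1)=0$, so $E_1$ spans the kernel of $dp$ at every point, while $dp(E_2)=v_2$ and $dp(E_3)=v_3$ are, by \eqref{length}, orthogonal and of common length $\sin\Lambda$. On the open set where $\Lambda\in(0,\tfrac\pi2)$ the map $p$ has rank two and is a conformal immersion of the leaves of the distribution spanned by $E_2,E_3$; its image $\Sigma=p(M)$ is a surface in $\mathbb{S}^3$ with unit normal $\xi$, and the flow of $E_1$ keeps $p$ fixed while rotating the frame $(v_2,v_3)$, i.e.\ it realizes the fibre direction of the unit tangent bundle $US$. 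Consequently $\tilde P$ is columnwise of the form $(p,\ w,\ \tilde Jw,\ N)$ with $w=v_2/\sin\Lambda$ and $N=\xi$, which is exactly the shape of the frame-bundle map $\mathcal P$ of Subsection \ref{minimalsurface}; the identity $\tilde Jw=v_3/\sin\Lambda$ is the choice of orientation on $\Sigma$. Branch points arise only where $\sin\Lambda\to0$ and are removed by the real-analyticity of the construction, which is why $\Sigma$ is a \emph{branched} minimal surface.

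\emph{Step 3: minimality.} It remains to prove that $\Sigma$ is minimal, that is, that its shape operator in the direction $\xi$ is trace free,
\begin{equation*}
\langle \bar D_{v_2}v_2,\xi\rangle+\langle \bar D_{v_3}v_3,\xi\rangle=0,
\end{equation*}
where $\bar D$ denotes the Euclidean connection on the first factor $\R^4$. I would compute each term by differentiating $df(E_j)=(dp(E_j),dq(E_j))$ with the Euclidean connection $D$ of $\R^8$ and projecting onto the first factor, using \eqref{label}--\eqref{labe} to pass from $D$ to $\nabla^E$, the Gauss formula $\tilde\nabla_{E_i}E_j=\nabla_{E_i}E_j+h(E_i,E_j)$ together with \eqref{formulaG} to pass from $\nabla^E$ to the connection $\nabla$ and the cubic form $h_{ij}^k$ of $M$, and finally \eqref{15bis}, \eqref{vxi} to rewrite everything in the columns of $\tilde P$. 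Substituting the connection- and cubic-form identities \eqref{x25}--\eqref{dere3}, the minimality relations \eqref{minimality}, and their consequences $h_{11}^1=h_{11}^2=h_{11}^3=0$, $h_{33}^3=-h_{22}^3$, $h_{22}^2=-h_{23}^3$, $E_1(\Lambda)=h_{13}^3$, $E_2(\Lambda)=h_{23}^3$, $E_3(\Lambda)=-h_{22}^3$, the two contributions cancel and the trace vanishes. The same computation shows more: the full matrix $\tilde P^{-1}d\tilde P$ is skew-symmetric, with $\sin\Lambda$ in the role of the conformal factor and $E_1$ in the role of the fibre coordinate $t$, so it has precisely the structure of the matrix $\Omega$ of Subsection \ref{minimalsurface}, the trace-free form of whose $(2,4),(3,4)$ block is exactly minimality.

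The conceptual content is light---$\tilde P$ is merely an adapted frame and minimality is the vanishing of a single trace---so I expect the main obstacle to be the bookkeeping in Step 3: one must track four different connections ($D$ on $\R^8$, $\nabla^E$, $\tilde\nabla$, and $\nabla$ on $M$), project consistently onto the first factor, and feed in the sizeable list of relations \eqref{x25}--\eqref{dere3} before the cancellation becomes visible. Matching the resulting matrix to $\Omega$ exactly (rather than merely certifying that it is skew with trace-free shape block) would in addition require passing to isothermal coordinates, but for the statement as given only the trace computation is needed.
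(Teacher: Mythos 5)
Your proposal is correct and follows essentially the same route as the paper: one identifies $\xi$ as the unit normal of the surface $p(M)$ (the paper pins this down via the cross-product formula \eqref{formulaG}, you via orthogonality and an orientation check), and minimality is read off from the vanishing trace of the second fundamental form in the direction $\xi$, using the relations \eqref{x25}--\eqref{dere3}. The paper simply makes the cancellation in your Step 3 explicit by recording $\sigma(E_2,E_2)=h_{13}^3=-\sigma(E_3,E_3)$, which is the concrete outcome of the computation you outline.
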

\begin{proof}
As mentioned before, by restricting to an open dense subset of $M$, we may assume that $\Lambda\in (0, \frac{\pi}{2})$. Recall that $dp(E_1)=0,$ hence $p(M)$ is a surface. Denoting the second fundamental form of the surface in the direction of $\xi$ by $\sigma$, a straightforward computation yields that
\begin{equation} 
\begin{array}{l}
\sigma(E_2,E_2)=h_{13}^3,\\
\sigma(E_2,E_3)=\sigma(E_3,E_2)=\frac{1}{\sqrt{3}}\cos\Lambda \sin \Lambda-h_{12}^3,\\
\sigma(E_3,E_3)=-h_{13}^3.
\end{array}
\end{equation}
As $dp(E_2)$ and $dp(E_3)$ are orthogonal and have the same length, the above formulas indeed imply that the surface is minimal.\\ 
Moreover we also see that the surface is totally geodesic if and only if $h_{13}^3=0$ and $h_{12}^3=\frac{1}{\sqrt{3}}\cos\Lambda \sin \Lambda$. 
Note also that if we write $(dp(E_2),0)=(p \alpha,0)$ and $(dp(E_3),0)=(p \gamma,0)$, we have that
\begin{align*}
G((dp(E_2),0),(dp(E_3),0))&=G((p \alpha,0),(p \gamma,0))\\
&=\tfrac 2 {3\sqrt{3}}(p(\alpha \times \gamma), 2 q (\alpha \times \gamma)).
\end{align*}
Therefore,
$$(p(\alpha \times \gamma),0)=\tfrac{3 \sqrt{3}}{4}(G((dp(E_2),0),(dp(E_3),0)- Q(G((dp(E_2),0),(dp(E_3),0))).$$
A straightforward computation, using \eqref{15bis} and \eqref{G}, shows that
this gives
$$(p(\alpha \times \gamma),0)=(\sin \Lambda)^2 \xi.$$
Therefore $\xi$ corresponds with the normal $N$ on the surface. 
\end{proof}

\subsection{The reverse construction} \label{section3.3}\hfill \break 
In the following, we will separate the study of the submanifold into three cases, according to whether the surface is totally geodesic or not and whether the map to the frame bundle is an immersion or not. 

\subsubsection{\textbf{Case 1. $p(M)$ is not a totally geodesic surface and the map $\tilde{\mathcal  P}$ is an immersion}}
In that case we can identify $M$ with the frame bundle on the minimal surface induced earlier. Recall that 
$$
\tilde{ \mathcal P}: x\in M \mapsto \left(p \ \frac{v_2}{\sin\Lambda} \ \frac{v_3}{\sin\Lambda} \ \xi \right ).
$$\\
Writing again $d \tilde{ \mathcal  P} =-\tilde{ \mathcal P} \tilde\Omega, $ we can express the matrix $\tilde\Omega$ in terms of $\{ E_1,E_2,E_3\}$ by
\[
\begin{pmatrix}
0& \sin(\Lambda)\omega_2  & \sin(\Lambda)\omega_3 &0 \\%
-\sin(\Lambda)\omega_2 & 0& \begin{tabular}{@{}c@{}}$\left(\frac{1}{\sqrt{3}}+h_{12}^3 \csc(2\Lambda)\right)\omega_1+$\\$+h_{22}^3\csc(2\Lambda)\omega_2+$\\$+h_{23}^3 \csc(2\Lambda)\omega_3$\end{tabular} &\begin{tabular}{@{}c@{}}$ h_{13}^3 \csc(\Lambda)\omega_2 +$\\$+\left(\frac{\cos\Lambda}{\sqrt{3}}-h_{12}^3 \csc\Lambda\right)\omega_3 $\end{tabular}\\%
-\sin(\Lambda)\omega_3&  \begin{tabular}{@{}c@{}}$ - (\frac{1}{\sqrt{3}}+h_{12}^3 \csc(2\Lambda))\omega_1-$\\$-h_{22}^3 \csc(2\Lambda)\omega_2-h_{23}^3 \csc(2\Lambda)\omega_3$\end{tabular} & 0&  \begin{tabular}{@{}c@{}}$\left(\frac{\cos\Lambda}{\sqrt{3}}-h_{12}^3 \csc(\Lambda)\right)\omega_2-$\\$-h_{13}^3 \csc(\Lambda)\omega_3$\end{tabular}\\%
0& \begin{tabular}{@{}c@{}}$\left(-\frac{\cos\Lambda}{\sqrt{3}}+h_{12}^3 \csc\Lambda\right)\omega_3-$\\$- h_{13}^3 \csc(\Lambda) \omega_2$\end{tabular}  &  \begin{tabular}{@{}c@{}}$\left(-\frac{\cos\Lambda}{\sqrt{3}}-h_{12}^3 \csc(\Lambda)\right)\omega_2-$\\$-h_{13}^3 \csc(\Lambda)\omega_3$\end{tabular}&0
\end{pmatrix}, \]
where $\omega_i(E_i)=\delta_{ij}.$
The above matrix implies that the map $\tilde{\mathcal P}$ into $SO(4) \subset \mathbb R^{16}$ is an immersion if and only if 
$$\tfrac{1}{\sqrt{3}} +h_{12}^3 \csc(2 \Lambda) \ne 0
.$$ 
As it is an immersion, in view of the dimensions, its image is an open part of the frame bundle and we can identify $M$ with an open part of the frame bundle on the minimal surface. Moreover we can write
$$\frac{v_2}{\sin\Lambda}= \cos (t+\gamma(t,u,v))\frac{p_u}{\mid p_u\mid}+\sin (t+\gamma(t,u,v))\frac{p_v}{\mid p_v\mid},$$
where $\gamma$ is some function. As $\tilde{\mathcal P}$ is an immersion, we have that $t +\gamma(t,u,v)$ depends on $t$ and can be taken as the new variable $t$ on the frame bundle. Doing so, we have that $\tilde{\mathcal P} = \mathcal P$ and $\tilde \Omega = \Omega$ (for $\mathcal P, \Omega$ as in subsection \ref{minimalsurface}). Comparing both expressions for the matrix $\Omega$ we deduce
\begin{align*} 
\omega_1=&\frac{1}{\frac{1}{\sqrt{3}}+h_{12}^3\csc 2\Lambda}\left(-\left( \sqrt{2}\frac{\csc 2\Lambda}{\sin \Lambda} e^{\omega/2}(h_{22}^3\cos t- h_{23}^3\sin t)+\frac{1}{2}\omega_v  \right)du-\right.\\
	&\left. -\left( \sqrt{2}\frac{\csc 2\Lambda}{\sin \Lambda} e^{\omega/2}(h_{22}^3\sin t+h_{23}^3\cos t )-\frac{1}{2}\omega_u  \right)dv+dt\right),\\
\omega_2=&\frac{1}{\sin\Lambda}\sqrt{2} e^{\omega/2}(\cos( t)du+\sin(t)dv),\\
\omega_3=&\frac{1}{\sin\Lambda}\sqrt{2} e^{\omega/2}(\cos( t)dv-\sin(t)du),
\end{align*}
as well as 
\begin{align}
\left\{
\begin{array}{ll}
&e^{-\omega}\cos(2t)+h_{13}^3\frac{1}{\sin^2\Lambda}=0,\\
&e^{-\omega}\sin(2t)+\left(h_{12}^3\csc\Lambda-\frac{\cos\Lambda}{\sqrt{3}}\right)\frac{1}{\sin\Lambda}=0,
\end{array}
\right.
\end{align}
which implies that 
\begin{align}
\label{h}
\left\{
\begin{array}{ll}
h_{13}^3&=-e^{-\omega}\cos(2t)\sin^2\Lambda,\\
h_{12}^3&=\left(-e^{-\omega}\sin(2t)\sin\Lambda  +\frac{\cos\Lambda}{\sqrt{3}}  \right)\sin\Lambda.
\end{array}
\right.
\end{align}
We may express $E_1,E_2,E_3$  with respect to the basis $\{\partial t,\partial u,\partial v\}$ as follows. For $E_i=a_i \partial t+b_i \partial u+c_i \partial v$, we use the previously obtained expressions of $\omega_j$ in $\omega_j(E_i)=\delta_{ij}$ and by straightforward computations we get
\begin{align}
E_1=&\left(\frac{1}{\sqrt{3}}+h_{12}^3\csc(2\Lambda)\right)\partial t,\nonumber\\
E_2=&(\csc(2\Lambda)h_{22}^3+\frac{1}{2\sqrt{2}}\sin\Lambda e^{-\omega/2}(\cos( t)\omega_v-\sin(t)\omega_u)  )\partial t+\nonumber\\
& +\frac{e^{-\omega/2}\cos t \sin\Lambda}{\sqrt{2}} \partial u+ \frac{e^{-\omega/2}\sin t \sin\Lambda}{\sqrt{2}}\partial v,
\label{basisE}\\
E_3=&\left(\csc(2\Lambda)h_{23}^3-\frac{1}{2\sqrt{2}}\sin\Lambda \ e^{-\omega/2}(\cos( t)\omega_u+\sin(t)\omega_v)\right  )\partial t-\nonumber\\
&-\frac{e^{-\omega/2}\sin t \sin\Lambda}{\sqrt{2}}\partial u +\frac{e^{-\omega/2}\cos t \sin\Lambda}{\sqrt{2}}    \partial v.\nonumber
\end{align}
In order to be able to proceed with the reverse construction, i.e. in order to be able to construct a Lagrangian immersion starting from the minimal surface we need to express $\Lambda$, $h_{22}^3$ and $h_{23}^3$ in terms of the variables $t,u,v$. 
Remark that, as  $E_1(\Lambda)=h_{13}^3$, we may use \eqref{h} and the expression of $E_1$ in \eqref{basisE} to  determine how  $\Lambda$ depends on the variable $t$. We get
\begin{equation}\label{Lambdat}
\Lambda_t=-\frac{2\cos( 2t)\sin^2\Lambda}{\sqrt{3}e^\omega-2\cos t\sin t\tan\Lambda}.
\end{equation}
In order to solve the above differential equation, we use \eqref{Lambdat} to compute the derivative of the expression $ \frac{\sqrt{3}e^{\omega}}{\tan\Lambda}-\sin (2t)$: 
$$
\partial t\left(  \frac{\sqrt{3}e^{\omega}}{\tan\Lambda}-\sin (2t)\right)^2=2\sin (4t),
$$ which, by integration, gives
$
\left(  \frac{\sqrt{3}e^{\omega}}{\tan\Lambda}-\sin( 2t)\right)^2=-\frac{1}{2}\cos (4t)+\tfrac{c_1}{4},
$ where $c_1$ does not depend on $t$. 
 Notice that this implies
\begin{equation}
\label{tan}
\tan\Lambda=\frac{2\sqrt{3}e^\omega}{ \varepsilon_1\sqrt{ c_1-2\cos(4t)} +2\sin(2t)},
\end{equation}
where $\varepsilon_1 = \pm 1$ and, at the same time, the surface is defined on an open set where $c_1-2\cos(4t)\geq0$. Note that as the above expression contains a square root which would complicate simplifications later on, we will avoid its use as much as possible. 
For later use, remark that we can write
\begin{equation}\label{43}
\left( \frac{2\sqrt{3}e^{\omega}}{\tan\Lambda}-2\sin(2t)  \right)^2=c_1-2\cos (4t).
\end{equation}
This implies that $c_1 \ge -2$, where equality can hold only on a set of measure $0$ of the Lagrangian submanifold $M$. So on an open dense subset we can write
$$c_1 = e^{\omega+\mu}-2.$$
Combining this with the previous expression of $c_1$ and taking the derivative with respect to $u$ and $v$, we can compute 
\begin{align*}
&\Lambda_u=-\frac{\sin ^2\Lambda  \left(\mu_u+e^{\omega } \cot \Lambda 
   \left(3 e^{\omega } \cot \Lambda  (\mu_u-\omega_u)-2 \sqrt{3} \mu_u \sin (2 t)\right)+\omega_u\right)}{6 e^{2 \omega } \cot \Lambda -2 \sqrt{3} e^{\omega } \sin
   (2 t)}\\
&\Lambda_v=-\frac{\sin ^2\Lambda  \left(\mu_v+e^{\omega } \cot \Lambda 
   \left(3 e^{\omega } \cot \Lambda (\mu_v-\omega_v)-2 \sqrt{3} \mu_v \sin (2 t)\right)+\omega_v\right)}{6 e^{2 \omega } \cot \Lambda -2 \sqrt{3} e^{\omega } \sin
   (2 t)}.
\end{align*}

Using this, together with \eqref{basisE}, we can solve in \eqref{dere2} and \eqref{dere3}, for $h_{22}^3$ and $h_{23}^3$. This gives us
\begin{align*}
h_{22}^3=\frac{e^{-3 \omega /2} \sin ^2\Lambda }{6 \sqrt{2}}& \bigl(3 e^{\omega } \cos \Lambda 
   ((\omega_u-\mu_u) \sin t+(\mu_v-\omega_v) \cos t)-\bigr.\\
	&\bigl.-\sqrt{3} \sin \Lambda ((\mu_u+\omega_u) \cos (3 t)+(\mu_v+\omega_v)\sin (3 t))\bigr),\\
h_{23}^3=\frac{e^{-3 \omega /2} \sin ^2\alpha }{6 \sqrt{2}}& \bigl(\sqrt{3} \sin \Lambda 
   ((\mu_u+\omega_u) \sin (3 t)+(-\mu_v-\omega_v) \cos (3 t))-\bigr.\\
	&\left.-3 e^{\omega } \cos \Lambda 
   (\mu_u-\omega_u) \cos t-3 e^{\omega } \cos \Lambda
    (\mu_v-\omega_v) \sin t\right).
\end{align*}
In order to determine a differential equation for the function $\mu$ 
we now apply the previously obtained Codazzi equations  for $M$.  
 By \eqref{basisE}, it turns out that  \eqref{cdz0} and the first $5$ equations of \eqref{cdz} are trivially satisfied. 
 Recall from \eqref{lap} that $\Delta\omega=-8\sinh\omega$.  The seventh equation of \eqref{cdz} reduces to
 \begin{equation}
\label{diff2}
\Delta{\mu}=-4 e^{\omega } (\cos (2 \Lambda )+2) \csc ^2\Lambda +8 \sqrt{3} \cot
   \Lambda  \sin (2 t)+8 \sinh \omega .
\end{equation}
 A straightforward computation, using the definition of $\mu$  and \eqref{43},  shows that this reduces to
\begin{equation}
\label{diff}
\Delta{\mu}=-e^{{\mu}}.
\end{equation}
Further on, with these new notations,  we may see by straightforward computations that the sixth  equation  of \eqref{cdz} is now  trivially satisfied. 

\hfill\break
\textbf{ Reverse construction} \hfill\break
We denote by $p:S\rightarrow \mathbb{S}^3\subset\mathbb{R}^4$ a given minimal surface $S$ which is not totally geodesic, on which we take suitable isothermal coordinates as introduced before. Hence we have a solution $\omega$ of
$\Delta\omega=-8\sinh\omega$. Additionally, we take a solution of
\begin{equation}
\label{diff}
\Delta{\mu}=-e^{{\mu}}
\end{equation}
and we take the open part of the frame bundle such that
\begin{equation}
\left( \frac{2\sqrt{3}e^{\omega}}{\tan\Lambda}-2\sin(2t)  \right)^2=e^{\omega+\mu}-2-2\cos( 4t)
\end{equation}
has a solution for the function $\Lambda$ on an open domain. 
We define
\begin{align*}
h_{13}^3&=-e^{\omega}\cos(2t)\sin^2\Lambda,\\
h_{12}^3&=(-e^{-\omega}\sin(2t)\sin\Lambda+\frac{\cos\Lambda}{\sqrt{3}})\sin\Lambda,
\end{align*} 
\begin{align*}
h_{22}^3=\frac{e^{-3 \omega /2} \sin ^2\Lambda }{6 \sqrt{2}}& \bigl(3 e^{\omega } \cos \Lambda 
   ((\omega_u-\mu_u) \sin t+(\mu_v-\omega_v) \cos t)-\bigr.\\
	&\bigl.-\sqrt{3} \sin \Lambda ((\mu_u+\omega_u) \cos (3 t)+(\mu_v+\omega_v)\sin (3 t))\bigr),\\
h_{23}^3=\frac{e^{-3 \omega /2} \sin ^2\Lambda }{6 \sqrt{2}}& \bigl(\sqrt{3} \sin \Lambda 
   ((\mu_u+\omega_u) \sin (3 t)+(-\mu_v-\omega_v) \cos (3 t))-\bigr.\\
	&\left.-3 e^{\omega } \cos \Lambda
   (\mu_u-\omega_u) \cos t-3 e^{\omega } \cos \Lambda (\mu_v-\omega_v) \sin t\right)
\end{align*} 
and we define as well a metric on the open part of the frame bundle, by assuming that the 
vectors
\begin{align}
E_1=&\frac{1}{2} \left(\sqrt{3}-2 e^{-\omega } \tan \Lambda  \sin t \cos t\right)\partial t,\nonumber \\ 
E_2=&-\frac{e^{-3 \omega /2} \sin \Lambda  }{12 \sqrt{2}}  \Bigl( \sqrt{3} \tan \Lambda 
   ((\mu_u+\omega_u) \cos (3 t)+(\mu_v+\omega_v) \sin (3 t))+3 e^{\omega } ((\mu_u+\omega_u) \sin t+\Bigr. \nonumber\\
 &\Bigl. + (-\mu_v-\omega_v) \cos \ t)\Bigr) \partial t+\frac{e^{-\frac{\omega}{2}}\cos t \sin \Lambda}{\sqrt{2}} \partial u+  \frac{ e^{-\frac{\omega}{2}} \sin t \sin\Lambda }{\sqrt{2}} \partial v, \label{framerelation}      \\
E_3=&\frac{e^{-3 \omega /2} \sin \Lambda } {12 \sqrt{2}}\Bigl(\sqrt{3} \tan \Lambda
   (({\mu_u}+{\omega_u}) \sin (3 t)+(-{\mu_v}-{\omega_v}) \cos (3 t))-3 e^{\omega } (({\mu_u}+{\omega_u}) \cos t +\Bigr. \nonumber \\  &\Bigl.+({\mu_v}+{\omega_v}) \sin t)\Bigr)\partial t 
	-\frac{e^{-\frac{\omega}{2}}\sin t \sin \Lambda}{\sqrt{2}} \partial u+  \frac{ e^{-\frac{\omega}{2}} \cos t 			 \sin\Lambda }{\sqrt{2}} \partial v\nonumber
\end{align}
form an orthonormal basis. \\
We now want to determine the Lagrangian immersion 
\begin{align*}
&f:S\times I\rightarrow\mathbb{S}^3\times\mathbb{S}^3\\
&(u,v,t)\mapsto f(u,v,t)=(p(u,v,t),q(u,v,t)).
\end{align*}
We already know that the first component is the given minimal surface $p$. We write for both bases
\[ 
 \begin{array}{ccc}  \label{system}
 \frac{\partial}{\partial t}(q)=q\beta_1, &  \frac{\partial}{\partial t}(p)=p\alpha_1, \\
 \frac{\partial}{\partial u}(q)=q\beta_2, &  \frac{\partial}{\partial u}(p)=p\alpha_2, \\
 \frac{\partial}{\partial v}(q)=q\beta_3, &  \frac{\partial}{\partial v}(p)=p\alpha_3,
 \end{array}  
\text{and}
  \begin{array}{ccc}
E_1(q)=q\tilde{\beta}_1, & E_1(p)=p\tilde\alpha_1, \\
E_2(q)=q\tilde{\beta}_2, & E_2(p)=p\tilde\alpha_2, \\ 
E_3(q)=q \tilde{\beta}_3, & E_3(p)=p \tilde\alpha_3.
 \end{array} \]
Note that $\alpha_1=0$ and $\alpha_2$ and $\alpha_3$ are determined by the minimal surface. In particular $\alpha_2$ and $\alpha_3$ are mutually orthogonal imaginary quaternions with length squared $2 e^\omega$.
From \eqref{framerelation} we then get that
\begin{align*}
&\tilde \alpha_1=0,\\
&\tilde \alpha_2=\frac{e^{-\frac{\omega}{2}}\cos t \sin \Lambda}{\sqrt{2}} \alpha_2+  \frac{ e^{-\frac{\omega}{2}} \sin t \sin\Lambda }{\sqrt{2}} \alpha_3,\\
 &\tilde \alpha_3=-\frac{e^{-\frac{\omega}{2}}\sin t \sin \Lambda}{\sqrt{2}} \alpha_2+  \frac{ e^{-\frac{\omega}{2}} \cos t 			 \sin\Lambda }{\sqrt{2}} \alpha_3
\end{align*}
and from the properties of the minimal surface we obtain
\begin{align*}
&\tfrac{\partial \alpha_2}{\partial u}=-\tfrac{\partial \alpha_3}{\partial v}= \tfrac 12 \omega_u \alpha_2  -\tfrac 12 \omega_v \alpha_3 -e^\omega \alpha_2 \times \alpha_3,\\
&\tfrac{\partial \alpha_2}{\partial v}=\tfrac 12 \omega_v \alpha_2 +\tfrac 12 \omega_u \alpha_3 +\alpha_2 \times \alpha_3,\\
&\tfrac{\partial \alpha_3}{\partial u}=\tfrac 12 \omega_v \alpha_2 +\tfrac 12 \omega_u \alpha_3 -\alpha_2 \times \alpha_3.
\end{align*}
Using the properties of the vector cross product, this also implies that
\begin{align*}
&\tfrac{\partial \alpha_2\times \alpha_3}{\partial u}= 2  \alpha_2 +2 e^\omega \alpha_3+\omega_u \alpha_2 \times \alpha_3,\\
&\tfrac{\partial \alpha_2 \times \alpha_3}{\partial v}=-2 e^\omega \alpha_2 -2 \alpha_3+ \omega_v \alpha_2 \times \alpha_3.
 \end{align*}
Now, in order to find $\tilde \beta_i$, we remark that the vectors $E_1$, $E_2$ and $E_3$ need to correspond with eigenvectors of the operators $A$ and $B$ with suitable eigenfunctions. 
We have
\begin{align}
\label{expressionbasisE}
E_1&=(0,q\tilde\beta_1),\nonumber\\
E_2&=(p\tilde\alpha_2,q\tilde\beta_2),\\
E_3&=(p\tilde\alpha_3,q\tilde\beta_3).\nonumber
\end{align}
The angle functions are $\theta_1=\frac{2\pi}{3}, \theta_2=2\Lambda+\frac{2\pi}{3}, \theta_3=-2\Lambda+\frac{2\pi}{3}$ and 
\begin{align}
\label{PEE2}
PE_i=\cos(2\theta_i)E_i+\sin(\theta_i )JE_i,
\end{align}for $i=1,2,3$.
At the same time, by the definition of $P$ in \eqref{defP} and by \eqref{expressionbasisE} we have 
\begin{equation}\label{PP2}
PE_1=(p\tilde\beta_1,0), \quad PE_2=(p\tilde\beta_2,q\tilde\alpha_2), \quad PE_3=(p\tilde\beta_3,q\tilde\alpha_3).
\end{equation}
Now we use the definition of $J$ to write out $JE_i$:
\begin{align}\label{JEi2}
JE_1&=\frac{1}{\sqrt{3}}(2p\tilde\beta_1,q\tilde\beta_1),\nonumber\\
JE_2&=\frac{1}{\sqrt{3}}(p(2\tilde\beta_2-\tilde\alpha_2),q(-2\tilde\alpha_2+\tilde\beta_2)),\\
JE_3&=\frac{1}{\sqrt{3}}(p(2\tilde\beta_3-\tilde\alpha_3),q(-2\tilde\alpha_3+\tilde\beta_3)).\nonumber
\end{align}
Then, by  using \eqref{JEi2}, \eqref{expressionbasisE} and the values of $\theta_i$ in \eqref{unghi}, we rewrite equation \eqref{PEE2} and,  by comparing it to \eqref{PP2}, we obtain 
\begin{align*}
&\tilde\beta_2=\frac{\cos(2\Lambda+\frac{2\pi}{3})-\frac{1}{\sqrt{3}} \sin(2\Lambda+\frac{2\pi}{3})}{1-\frac{2}{\sqrt{3}}\sin(2\Lambda+\frac{2\pi}{3})}\tilde\alpha_2=\tfrac 12 (1-\sqrt{3} \cot \Lambda) \tilde \alpha_2, \\
&\tilde\beta_3=\frac{\cos(-2\Lambda+\frac{2\pi}{3})-\frac{1}{\sqrt{3}} \sin(-2\Lambda+\frac{2\pi}{3})}{1-\frac{2}{\sqrt{3}}\sin(-2\Lambda+\frac{2\pi}{3})}\tilde\alpha_3=\tfrac 12 (1+\sqrt{3} \cot \Lambda) \tilde \alpha_3.
\end{align*}
Next we continue the computations  in order to determine $\tilde\beta_1$. 
For this,  we compute $G(E_2,E_3)$ in two different ways, once using \eqref{G} and once using \eqref{formulaG}. We obtain, respectively
$$
G(E_2,E_3)=-\frac 1 {\sqrt{3}} JE_1=-\frac{1}{3}(p 2 \tilde \beta_1,q \tilde \beta_1),
$$
and 
\begin{align*}
G(E_2,E_3)&=G((p \tilde \alpha_2,q \tilde \beta_2),(p \tilde \alpha_3,q \tilde \beta_3))\\
&=\tfrac{2}{3 \sqrt{3}} (p(\tilde \beta_2  \times \alpha_3 +\tilde \alpha_2 \times \tilde \beta_3+ \tilde \alpha_2 \times \tilde \alpha_3 -2 \tilde \beta_2 \times \tilde \beta_3,\\
&\qquad \qquad q(-\tilde \beta_2  \times \alpha_3 -\tilde \alpha_2 \times \tilde \beta_3+ 2\tilde \alpha_2 \times \tilde \alpha_3 - \tilde \beta_2 \times \tilde \beta_3)\\
&=\tfrac{2}{3 \sqrt{3}}(p(2-\tfrac 12(1-3 \cot^2 \Lambda)) \tilde \alpha_2 \times \tilde \alpha_3,q(1-\tfrac 13(1-3 \cot^2 \Lambda)) \tilde \alpha_2 \times \tilde \alpha_3)\\
&=\tfrac{1}{2 \sqrt{3}}(1+\cot^2 \Lambda) (2 p  \tilde \alpha_2 \times \tilde \alpha_3,q  \tilde \alpha_2 \times \tilde \alpha_3).
\end{align*}
Hence, comparing both expressions we get that 
$$\tilde \beta_1= -\tfrac{\sqrt{3}}{2} \csc^2 \Lambda \ \tilde \alpha_2 \times \tilde \alpha_3=-\tfrac{\sqrt{3}}{4} e^{-\omega} \alpha_2 \times \alpha_3.$$
Moreover, we also obtain that
\begin{align*}
\tilde \beta_2&=\tfrac 1{2\sqrt{2}} (1-\sqrt{3} \cot \Lambda) e^{-\tfrac{\omega}{2}} \sin \Lambda  (\cos t \alpha_2 + \sin t \alpha_3),\\
\tilde \beta_3&=\tfrac 1{2\sqrt{2}} (1+\sqrt{3} \cot \Lambda) e^{-\tfrac{\omega}{2}} \sin \Lambda  (-\sin t \alpha_2 + \cos t \alpha_3).
\end{align*}
We then take the inverse of \eqref{framerelation} and  deduce that
\begin{align*}
\beta_1&= -\frac{\sqrt{3} \alpha_2 \times \alpha_3 }{2 \sqrt{3} e^{\omega }-2 \sin
   (2t) \tan (\Lambda )}, \\
\beta_2&= \frac{1}{8} \left(e^{-\omega } \left( {\mu_v}+ {\omega_v}-\frac{( {\mu_u}+ {\omega_u}) \cos
   (2 t) \tan (\Lambda )}{\sqrt{3} e^{\omega }
  -\sin (2 t) \tan (\Lambda )}\right)\alpha_2 \times \alpha_3  - 4 (\sqrt{3} \cot (\Lambda )\cos (2 t)+1) \alpha_2  -4 \sqrt{3}\sin (2 t) \cot\Lambda\alpha_3   \right), \\
 \beta_3&= \frac{1}{8} \left(-e^{-\omega } \left( {\mu_u}+ {\omega_u}+\frac{( {\mu_v}+ {\omega_v}) \cos
   (2 t) \tan (\Lambda )}{\sqrt{3} e^{\omega }
  -\sin (2 t) \tan (\Lambda )}\right)\alpha_2 \times \alpha_3  - 4 \sqrt{3} \cot (\Lambda )\sin (2 t) \alpha_2  +4(1+ \sqrt{3}\cos (2 t) \cot\Lambda)\alpha_3   \right).
\end{align*}
By straightforward computations, it now follows that
\begin{align*}
&\frac{\partial \beta_1}{\partial u}-\frac{\partial \beta_2}{\partial t}-2 \beta_1 \times \beta_2 =0,\\
&\frac{\partial \beta_1}{\partial v}-\frac{\partial \beta_3}{\partial t}-2 \beta_1 \times \beta_3 =0,\\
&\frac{\partial \beta_3}{\partial u}-\frac{\partial \beta_2}{\partial v}-2 \beta_3 \times \beta_2 =0,
\end{align*}
from which we deduce that the integrability conditions for the immersion $q$ are satsified.

\subsubsection{\textbf{Case 2. The minimal surface $p(M)$ is totally geodesic, i.e. $ \sigma=0$}}

As mentioned before this means that $h_{13}^3=0$, $ h_{12}^3=\frac{\cos\Lambda\sin\Lambda}{\sqrt{3}}$.
The equations following from  \eqref{opA} and \eqref{opB}, just like in the first case, give
\begin{equation}
\begin{array}{lll}
h_{12}^2=0,& \omega_{11}^2=0,&\omega_{21}^3=-\frac{2+\cos (2\Lambda)}{2\sqrt{3}},\\
h_{11}^2=0,&\omega_{11}^3=0,& \omega_{22}^3=-h_{22}^3\cot (2\Lambda), \\
h_{11}^3=0,&\omega_{12}^3=\frac{\sin^2\Lambda}{\sqrt{3}}, &\omega_{31}^2=\frac{2+\cos (2\Lambda)}{2\sqrt{3}},\\
\omega_{21}^2=0,&\omega_{31}^3=0, &\omega_{32}^3=-h_{23}^3\cot (2\Lambda) \\
\end{array}
\end{equation}
and 
\begin{equation}
\begin{array}{l}
E_1(\Lambda)=0,\\
E_2(\Lambda)=h_{23}^3,\\
E_3(\Lambda)=-h_{22}^3.
\end{array}
\end{equation}
In this case, the equations of Codazzi become
\begin{equation}\label{rel1}
E_1(h_{23}^3)=-\frac{\sqrt{3}}{2}h_{22}^3, \quad E_1(h_{22}^3)=\frac{\sqrt{3}}{2}h_{23}^3,\quad E_2(h_{22}^3)=-E_3(h_{23}^3)
\end{equation}and
\begin{equation}\label{rel1prim}
-1-(1+12 (h_{22}^3)^2+12 (h_{23}^3)^2)\cos (2\Lambda)+\cos(4\Lambda)+\cos(6\Lambda)+4(E_2(h_{23}^3)-E_3(h_{22}^3))\sin(2\Lambda)=0.
\end{equation}
In what follows we are going to introduce new vector fields on $M$ by:
\begin{align}
\label{newbasis}
&X_1=\frac{4}{\sqrt{3}} E_1, \nonumber\\
&X_2=-\frac{2 h_{22}^3 \csc ^2\Lambda \sec \Lambda }{\sqrt{3}}E_1+2
   \csc \Lambda \ E_2,\\
&X_3=-\frac{2 h_{23}^3 \csc ^2\Lambda \sec\Lambda }{\sqrt{3}}E_1+2
   \csc \Lambda\ E_3. \nonumber
\end{align}
We can easily check that 
\begin{align}\label{lieX}
&[X_1,X_2]=2 X_3,\nonumber\\
&[X_2,X_3]=2 X_1,\\
&[X_3,X_1]=2 X_2.\nonumber
\end{align}
Taking a canonical metric on M such that $X_1$, $X_2$ and $X_3$ have unit length and are mutually orthogonal, it follows from the Koszul formula that all connection components are determined. 
From (4.1),  Proposition 5.2 and its preceeding paragraph in \cite{dioosvranckenwang} it follows that we can locally identify M with  $\mathbb{S}^3$ and we can consider $X_1$, $X_2$ and $X_3$ as the standard vector fields on  $\mathbb{S}^3$ with 
\begin{align}
\label{gi}
&X_1(x)=xi,\nonumber\\
&X_2(x)=xj,\\
&X_3(x)=xk.\nonumber
\end{align}
Using the above formulas, the component $p$ of the map can now be determined explicitly.
 First, we write 
\begin{equation}\label{not}
D_{X_i} p=p\alpha_i,
\end{equation} for $i=1,2,3$, where $D$ denotes the Euclidean covariant derivative.
Of course, by Theorem \ref{th1}, $D_{X_1} p=0$. Then, we may compute by \eqref{dp}
\begin{align*}
&(dp(X_2),0)= (\frac{2\cos\Lambda}{\sqrt{3}}+2\sin\Lambda) E_2+(-2\cos\Lambda+\frac{\sin\Lambda}{\sqrt{3}})JE_2,\\
&(dp(X_3),0)=(-\frac{2\cos\Lambda}{\sqrt{3}}+2\sin\Lambda) E_3 +(2\cos\Lambda+\frac{\sin\Lambda}{\sqrt{3}})JE_3
\end{align*}
and  we see that 
\begin{equation}
\begin{array}{ll}
 \nabla^E_{X_1}( dp(X_2),0)= (2 dp(X_3),0),& \nabla^E_{X_2}(dp(X_3),0)=(0,0), \\
 \nabla^E_{X_1}(dp(X_3),0)= (-2 dp(X_2),0),&  \nabla^E_{X_3}( dp(X_2),0)= (0,0),  \\
 \nabla^E_{X_2}(dp(X_2),0)= (0,0),&  \nabla^E_{X_3} (dp(X_3),0)= (0,0).
\end{array}
\end{equation}
Moreover, it is straightforward to get
\begin{equation}\label{lungime}
  \langle dp(X_2),dp(X_2) \rangle=\langle dp(X_3),dp(X_3) \rangle=4, \quad  \langle dp(X_2),dp(X_3) \rangle=0.
\end{equation}
Next, we want to determine a system of differential equations satisfied by $\alpha_2$ and $\alpha_3$.
For this, we consider $\nks\in \mathbb{R}^4\times\mathbb{R}^4$. On the one hand,  we use  \eqref{not}  together with $D_X(dp(Y),0)=(D_Xdp(Y),0)$. On the other hand,  we use \eqref{labe}   and, therefore,  we  obtain
\begin{equation}
\begin{array}{ll}
\label{syst}
X_1(\alpha_2)=2\alpha_3,& X_1(\alpha_3)=-2\alpha_2,\\
X_2(\alpha_2)=0,&X_2(\alpha_3)=-\alpha_2 \times\alpha_3,\\
X_3(\alpha_2)=-\alpha_3\times\alpha_2, & X_3(\alpha_3)=0.
\end{array}
\end{equation}
We choose a unit quaternion $h$  such that at the point $p(x)=1$ we have
\begin{align*}
&\alpha_2(1)=-2hjh^{-1},\\
&\alpha_3(1)=-2hkh^{-1},\\
&\alpha_2\times\alpha_3(1)=4hih^{-1}.
\end{align*}
Using \eqref{gi}, we can check that $\alpha_2=-2hxjx^{-1}h^{-1}$, $\alpha_3=-2hxkx^{-1}h^{-1}$ and $\alpha_2\times\alpha_3=4hxix^{-1}h^{-1}$
are the unique solutions for the  system of differential equations in \eqref{syst}:
\begin{align*}
X_1(\alpha_2)=X_1(-2hxjx^{-1}h^{-1})&=-2(hX_1(x)jx^{-1}h^{-1}+hxjX_1(x^{-1})h^{-1})\\
&=-4hxkx^{-1}h^{-1}\\
&=2\alpha_3,
\end{align*}
\begin{align*}
X_1(\alpha_3)=X_1(-2hxkx^{-1}h^{-1})&=-2(hX_1(x)kx^{-1}h^{-1}+hxkX_1(g^{-1})h^{-1})\\
&=4hxjx^{-1}h^{-1}\\
&=-2\alpha_2,
\end{align*}
\begin{align*}
X_2(\alpha_3)=X_2(-2hxkx^{-1}h^{-1})&=-2(hxjkx^{-1}h^{-1}+hxk(-j)x^{-1}h^{-1})\\
&=-4hxix^{-1}h^{-1}\\
&=-\alpha_2\times\alpha_3,
\end{align*}
\begin{align*}
X_2(\alpha_2)=X_2(-2hxjx^{-1}h^{-1})&=-2(hxjjx^{-1}h^{-1}+hxj(-j)x^{-1}h^{-1})\\
&=0,
\end{align*}
\begin{align*}
X_3(\alpha_3)=X_3(-2hxkx^{-1}h^{-1})&=-2(hxkk x^{-1}h^{-1}+hxk(-k)x^{-1}h^{-1})\\
&=0,
\end{align*}
\begin{align*}
X_3(\alpha_2)=X_3(-2hxjx^{-1}h^{-1})&=-2(hxkj x^{-1}h^{-1}+hxj(-k)x^{-1}h^{-1})\\
&=4hxix^{-1}h^{-1}\\
&=\alpha_2\times\alpha_3.
\end{align*}
This in its turn implies that
\begin{equation}
\label{p}
p(x)=-hixix^{-1}h^{-1}
\end{equation} 
is the unique solution of $X_i(p)= p \alpha_i$ with initial conditions $p(1)=1$. Indeed we have
\begin{align*}
&X_1(p)=X_1(-hixix^{-1}h^{-1})=0=p \alpha_1,\\
&X_2(p)=X_2(-hixix^{-1}h^{-1}) = 2 hixk x^{-1}h^{-1}=(-hi xix^{-1}h^{-1})(-2 hxjx^{-1}h^{-1})=p\alpha_2,\\
&X_3(p)=X_3(-hixix^{-1}h^{-1}) = -2 hixj x^{-1}h^{-1}=(-hi xix^{-1}h^{-1})(-2 hxkx^{-1}h^{-1})=p\alpha_3.
\end{align*}
Before we can determine the second component $q$ of the Lagrangian immersion, we need to explore the Codazzi equations further. First we look at the system of differential equations for the function $\Lambda$ in \eqref{rel1} and \eqref{rel1prim}. Notice that by using the relations in \eqref{newbasis} we have that
\begin{equation}\label{newstar}
\begin{array}{l}
X_1(\Lambda) = 0,\\
X_2(\Lambda) = 2 h_{23}^3 \csc \Lambda ,\\
 X_3(\Lambda) = -2 h_{22}^3 \csc \Lambda ,
\end{array}
\end{equation}
where the last two equations can be seen as the definition for the functions 
$h_{23}^3$ and $h_{22}^3$. The first one is, of course, a condition for the unknown function of $\Lambda$. Three out of the four Codazzi equations then can be seen as integrability conditions for the existence of a solution of this system, whereas the last one 
reduces to 
$$X_2(X_2(\Lambda))+X_3(X_3(\Lambda)) = (\cot(\Lambda) -\tan(\Lambda))( (X_2(\Lambda))^2+(X_3(\Lambda))^2) +4(1+2 \cos(2 \Lambda)) \cot(\Lambda).$$
Under the change of variable $\Lambda=\arctan(e^{2\beta})$, this  equation simplifies to 
\begin{equation}\label{ec}
X_2(X_2(\beta))+X_3(X_3(\beta))=\frac{2(3-e^{4\beta})}{e^{4\beta}}.
\end{equation}
Note also that for $\Lambda = \pm \frac{2 \pi}{3}$, we get the trivial solutions of the problem (corresponding to the constant sectional curvature case).
\begin{remark} Note that there exist at least locally many solutions of the system
\begin{align*}
&X_1(\beta)= 0,\\
&X_2(X_2(\beta))+X_3(X_3(\beta))=\frac{2(3-e^{4\beta})}{e^{4\beta}}.
\end{align*}
This can be seen by choosing special coordinates on the usual  $\mathbb{S}^3$. 
We take
\begin{align*}
x_1=\cos v\cos(t+u),\\
x_2=\cos v\sin(t+u),\\
x_3=\sin v\cos(u-t),\\
x_4=\sin v\sin(u-t).\\
\end{align*} 
As, given \eqref{gi}, at the point $x=(x_1,x_2,x_3,x_4)$ the vectors in the basis are 
\begin{align*}
X_1(x)=(-x_2,x_1,x_4,-x_3),\\
X_2(x)=(-x_3,-x_4,x_1,x_2),\\
X_3(x)=(-x_4,x_3,-x_2,x_1),
\end{align*}
 it is straightforward to see that
\begin{align*}
&\partial t=X_1,\\
&\partial u=\cos(2v)X_1+\sin(2t)\sin(2v)X_2+\cos(2t)\sin(2v)X_3,\\
&\partial v=\cos(2t)X_2-\sin(2t)X_3,
\end{align*}
and conversely,
\begin{align*}
&X_1=\partial u,\\
&X_2=\frac{\sin(2t)}{\sin(2v)}\partial u-\sin(2t)\frac{\cos(2v)}{\sin(2v)}\partial t+\cos(2t)\partial v,\\
&X_3=\frac{\cos(2t)}{\sin(2v)}\partial u-\cos(2t)\frac{\cos(2v)}{\sin(2v)}\partial t-\sin(2t)\partial v.
\end{align*}
At last, the equations in \eqref{ec} become $\tfrac{\partial}{\partial t} \beta=0$ and 
\begin{equation}
\csc^2(2v)\frac{\partial^2\beta}{\partial u^2}+\frac{\partial^2\beta}{\partial v^2}+2\cot(2v)\frac{\partial \beta}{\partial v}=2(3e^{-4\beta}-1).
\end{equation}
\end{remark}
In the following part we are going to determine the second part of the immersion. We start with an arbitrary solution of 
\begin{align*}
&X_1(\beta)= 0,\\
&X_2(X_2(\beta))+X_3(X_3(\beta))=\frac{2(3-e^{4\beta})}{e^{4\beta}}
\end{align*}
and we are going to find a system of differential equations determining the immersion $q$. We define $h_{22}^3$ and $h_{23}^3$ as in \eqref{newstar} and such that $\lambda=\arctan(e^{2\beta})$.  First, we  can write for each of the bases that we took, $\{E_i\}$ and $\{X_i\}$, the following:
\[ 
 \begin{array}{ccc}  \label{system}
X_1(q)=q\beta_1, & X_1(p)=p\alpha_1, \\
X_2(q)=q\beta_2, & X_2(p)=p\alpha_2, \\
X_3(q)=q\beta_3, & X_3(p)=p\alpha_3,
 \end{array}  
\text{and}
  \begin{array}{ccc}
E_1(q)=q\tilde{\beta}_1, & E_1(p)=p\tilde\alpha_1, \\
E_2(q)=q\tilde{\beta}_2, & E_2(p)=p\tilde\alpha_2, \\
E_3(q)=q \tilde{\beta}_3, & E_3(p)=p \tilde\alpha_3,
 \end{array} \]
where $\alpha_1=0$ and $\alpha_2$ and $\alpha_3$ are as determined previously.
 Then,  we  prove as before that
\begin{align}
\label{alphabeta}
&\tilde\beta_1=-\frac{\sqrt{3}}{2\sin^2\Lambda}\tilde\alpha_2\times \tilde\alpha_3,\nonumber\\
&\tilde\beta_2=\frac{\cos(2\Lambda+\frac{2\pi}{3})-\frac{1}{\sqrt{3}} \sin(2\Lambda+\frac{2\pi}{3})}{1-\frac{2}{\sqrt{3}}\sin(2\Lambda+\frac{2\pi}{3})}\tilde\alpha_2=\tfrac 12 (1-\sqrt{3} \cot \Lambda) \tilde \alpha_2,\\
&\tilde\beta_3=\frac{\cos(-2\Lambda+\frac{2\pi}{3})-\frac{1}{\sqrt{3}} \sin(-2\Lambda+\frac{2\pi}{3})}{1-\frac{2}{\sqrt{3}}\sin(-2\Lambda+\frac{2\pi}{3})}\tilde\alpha_3=\tfrac 12 (1+\sqrt{3} \cot \Lambda) \tilde \alpha_3\nonumber
\end{align}
and  we continue the computations in order to find the system of differential equations for the immersion $q$ in terms of the basis $\{X_i\}$.
As we identify $df(X_1)\equiv X_1$, we have
$$
D_{X_1}f=(X_1(p),X_1(q))=(0,q\beta_1)\equiv X_1\overset{\eqref{newbasis}}{=}\frac{4}{\sqrt{3}}E_1=\frac{4}{\sqrt{3}}(p\tilde\alpha_1,q\tilde\beta_1).
$$
Therefore, $\beta_1=\frac{4}{\sqrt{3}}\tilde\beta_1.$ We may compute similarly for $D_{X_2}f$ and $D_{X_3}f$ and find
\[ \begin{array}{cc}
\left\{
\begin{array}{l}
\beta_2=2 \csc \Lambda\ \tilde\beta_2-\frac{2}{\sqrt{3}} h_{22}^3 \csc ^2\Lambda \sec \Lambda \tilde\beta_1,\\
\beta_3=2 \csc \Lambda\ \tilde\beta_3-\frac{2}{\sqrt{3}} h_{23}^3 \csc ^2\Lambda \sec \Lambda \tilde\beta_1,
\end{array}
\right.
& 
\left\{
\begin{array}{l}
\tilde\alpha_2=-\frac{1}{\csc\Lambda}hxjx^{-1}h^{-1},\\
\tilde\alpha_3=-\frac{1}{\csc\Lambda}hxkx^{-1}h^{-1}\\
\end{array}
\right.
 \end{array} \]
 and
 $$\tilde \beta_1=-\frac{\sqrt{3}}{2}hxix^{-1}h^{-1}.$$ 
Using now relations \eqref{alphabeta} we may express
\begin{align*}
\beta_2=-(1-\sqrt{3} \cot\Lambda)\  hxjx^{-1}h^{-1}+h_{22}^3 \csc ^2\Lambda \sec \Lambda \ hxix^{-1}h^{-1},\\
\beta_3=-(1+\sqrt{3} \cot\Lambda) \  hxkx^{-1}h^{-1} + h_{23}^3 \csc ^2\Lambda \sec \Lambda  \ hxix^{-1}h^{-1}.
\end{align*}
Finally, as $X_i(q)=q\beta_i$, we find
\begin{align*}
\left\{
\begin{array}{l}
X_1(q)=-2qhxix^{-1}h^{-1},\\
X_2(q)=q(h_{22}^3 \csc ^2\Lambda \sec \Lambda \ hxix^{-1}h^{-1}-(1-\sqrt{3} \cot\Lambda)\ hxjx^{-1}h^{-1}),\\
X_3(q)=q( h_{23}^3 \csc ^2\Lambda \sec \Lambda  \ hxix^{-1}h^{-1} -(1+\sqrt{3} \cot\Lambda) \ hxkx^{-1}h^{-1}),
\end{array}
\right.
\end{align*}
which, given \eqref{newstar} and $\lambda=\arctan(e^{2\beta})$, is equivalent to 
\begin{align*}
\left\{
\begin{array}{l}
X_1(q)=-2qhxix^{-1}h^{-1},\\
X_2(q)=q\left(-X_3(\beta)  hxix^{-1}h^{-1}-(1-\sqrt{3} e^{-2\beta})\ hxjx^{-1}h^{-1}\right),\\
X_3(q)=q\left( X_2(\beta)\ hxix^{-1}h^{-1} -(1+\sqrt{3} e^{-2\beta}) \ hxkx^{-1}h^{-1}\right).
\end{array}
\right.
\end{align*}
By straightforward computations, one may see that $ X_i(X_j(q))-X_j(X_i(q))=[X_i,X_j](q)$ hold for $i,j=1,2,3$.

\subsubsection{\textbf{Case 3. The minimal surface $p(M)$ is not totally geodesic, but the map $\mathcal P$ is not an immersion}}

As mentioned before this means that
\begin{equation}
 h_{12}^3=-\frac{\sin(2 \Lambda)}{\sqrt{3}}.
\end{equation}
Therefore, the equations in subsection \ref{subs} which follow from \eqref{opA} and \eqref{opB} become
\[ \begin{array}{lll}
h_{12}^2=-h_{13}^3,& \omega_{12}^3=\frac{1+2\cos( 2\Lambda)}{2\sqrt{3}}, & \omega_{21}^2=\omega_{31}^3=-h_{13}^3\cot \Lambda,\\
h_{11}^2=h_{11}^3=0,& \omega_{21}^3=\frac{1+2\cos (2\Lambda)}{2\sqrt{3}},  &\omega_{31}^2= -\frac{1+2\cos( 2\Lambda)}{2\sqrt{3}}, \\
\omega_{11}^2=\omega_{11}^3=0,& \omega_{22}^3=-h_{22}^3\cot(2\Lambda), &\omega_{32}^3=-h_{23}^3\cot( 2\Lambda)
 \end{array} \]
 and 
\begin{equation}\label{derivlambda}
E_1(\Lambda)=h_{13}^3,\quad E_2(\Lambda)=h_{23}^3,\quad E_3(\Lambda)=-h_{22}^3.
\end{equation}
Moreover, the equations of Codazzi in \eqref{cdz0} yield $h_{13}^3=0$ and, therefore, $\omega_{21}^2=\omega_{31}^3=0$. The first two equations in \eqref{cdz} imply that 
$$
E_1(h_{23}^3)=0\quad \text{and}\quad E_1(h_{22}^3)=0,
$$
while the next three ones vanish identically.
The last two equations in \eqref{cdz} become 
\begin{equation}
 \label{6}
E_2(h_{22}^3)=-E_3(h_{23}^3)
\end{equation}
and 
\begin{equation}\label{ec7}
-1-[1+6(h_{22}^3)^2+6(h_{23}^3)^2]\cos 2\Lambda+\cos 4\Lambda+\cos 6\Lambda+2[-E_3(h_{22}^3)+E_2(h_{23}^3)]\sin 2\Lambda=0,
\end{equation}
respectively. The Lie brackets of the vector fields $E_1,E_2,E_3$ give
\begin{align*}
&[E_1,E_2]=0,\\
&[E_1,E_3]=0,\\
&[E_2,E_3]=-\frac{1+2\cos (2\Lambda)}{\sqrt{3}}E_1+h_{22}^3\cot (2\Lambda) E_2+h_{23}^3\cot( 2\Lambda ) E_3.
\end{align*}
Next, we take new vector fields $X_1,X_2,X_3$ of the form 
\begin{align}
&X_1=E_1,\nonumber\\
&X_2=\frac{\sqrt{2}(h_{22}^3-h_{23}^3)}{3^{\frac{3}{4}}(\sin (2\Lambda))^\frac{3}{2}}E_1+\frac{\sqrt{2}}{3^{\frac{1}{4}}\sqrt{\sin (2\Lambda)}}E_2-\frac{\sqrt{2}}{3^{\frac{1}{4}}\sqrt{\sin (2\Lambda)}}E_3,\label{vectors1}\\
&X_3=\frac{\sqrt{2}(h_{22}^3+h_{23}^3)}{3^{\frac{3}{4}}(\sin (2\Lambda))^\frac{3}{2}}E_1+\frac{\sqrt{2}}{3^{\frac{1}{4}}\sqrt{\sin( 2\Lambda)}}E_2+\frac{\sqrt{2}}{3^{\frac{1}{4}}\sqrt{\sin (2 \Lambda)}}E_3.\nonumber
\end{align}
We can easily check that $[X_1,X_2]=0$, $[X_1,X_3]=0$ and $[X_2,X_3]=0$, therefore, by the lemma on page 155 in \cite{boothby}, we know that there exist coordinates $\{t,u,v\}$ on $M$ such that 
\begin{align*}
X_1=\partial t,\\
X_2=\partial u,\\
X_3=\partial v.
\end{align*}
 Using \eqref{derivlambda} we obtain:
\begin{align*}
&\Lambda_t=0,\\
&\Lambda_u=\frac{h_{22}^3+h_{23}^3}{3^{1/4}\sqrt{\cos\Lambda\sin\Lambda}},\\
&\Lambda_v=\frac{-h_{22}^3+h_{23}^3}{3^{1/4}\sqrt{\cos\Lambda\sin\Lambda}}.
\end{align*}
Furthermore, we express $h_{22}^3$ and $h_{23}^3$ from the previous relations as
\begin{align*}
h_{22}^3=\frac{1}{2}3^{1/4}(\Lambda_u-\Lambda_v)\sqrt{\cos\Lambda\sin\Lambda},\\
h_{23}^3=\frac{1}{2}3^{1/4}(\Lambda_u+\Lambda_v)\sqrt{\cos\Lambda\sin\Lambda}
\end{align*}
and therefore, the expression of \eqref{vectors1} becomes
\begin{align}
&X_1=E_1,\nonumber\\
&X_2=-\frac{\Lambda_v \csc (2\Lambda)}{\sqrt{3}}E_1+\frac{1}{3^{\frac{1}{4}}\sqrt{\cos\Lambda\sin\lambda}} E_2-\frac{1}{3^{\frac{1}{4}}\sqrt{\cos\Lambda\sin\lambda}} E_3\label{X},\\
&X_3=\frac{\Lambda_u \csc (2\Lambda)}{\sqrt{3}}E_1+\frac{1}{3^{\frac{1}{4}}\sqrt{\cos\Lambda\sin\lambda}} E_2+\frac{1}{3^{\frac{1}{4}}\sqrt{\cos\Lambda\sin\lambda}} E_3.\nonumber
\end{align}
Finally, by  straightforward computations, one may see that equation  \eqref{ec7} becomes
\begin{multline}\label{eqq7}
-\sqrt{3}(\Lambda_u^2+\Lambda_v^2)\cos (2\Lambda)+3^{\frac{1}{4}}(E_2(\Lambda_u)-E_3(\Lambda_u)+E_2(\Lambda_v)+E_3(\Lambda_v))\sqrt{\cos\Lambda\sin\Lambda}-\\-2(\sin(2\Lambda)+\sin (4\Lambda))=0.
\end{multline}
We compute $dp(\partial u)$ and $dp(\partial v)$: 
\begin{multline}
dp\left(\partial u\right)=\frac{\sqrt{3}-2 \cos \left(2\Lambda +\frac{\pi }{6}\right)}{ 3^{3/4} \sqrt{2} \sqrt{\sin( 2\Lambda)}}E_2+\frac{2 \sin
   \left( 2\Lambda +\frac{\pi}{3}\right)-\sqrt{3}}{3^{3/4}\sqrt{2}  \sqrt{\sin (2\Lambda) }}E_3+\frac{2 \cos \left( 2\Lambda +\frac{2\pi}{3}
   \right)+1}{3^{3/4} \sqrt{2} \sqrt{\sin(2\Lambda) }}JE_2+\\
+\frac{2 \cos \left( 2\Lambda +\frac{\pi}{3} \right)-1}{{ 3^{3/4}\sqrt2}  \sqrt{\sin (2\Lambda)}}JE_3,\nonumber
\end{multline}
\begin{multline}
dp(\partial v)=\frac{\sqrt{3}-2 \cos \left(2\Lambda +\frac{\pi }{6}\right)}{ 3^{3/4} \sqrt{2} \sqrt{\sin( 2\Lambda)}}E_2-\frac{2 \sin
   \left( 2\Lambda +\frac{\pi}{3}\right)-\sqrt{3}}{3^{3/4}\sqrt{2}  \sqrt{\sin (2\Lambda) }} E_3+\frac{2 \cos \left( 2\Lambda +\frac{2\pi}{3}
   \right)+1}{3^{3/4} \sqrt{2} \sqrt{\sin(2\Lambda) }} JE_2-\\-\frac{2 \cos \left( 2\Lambda +\frac{\pi}{3} \right)-1}{{ 3^{3/4}\sqrt2}  \sqrt{\sin (2\Lambda)}}JE_3, \nonumber
\end{multline}
and we remark that they are mutually orthogonal and that their length is
$\frac{2\tan \Lambda}{\sqrt{3}}.$ So, as $u,v$ are isothermal coordinates on the surface, for which $\langle \partial u,\partial u \rangle=\langle \partial v,\partial v \rangle=2 e^\omega $, we obtain that 
\begin{equation}\label{egalitate}
e^\omega=\frac{\tan \Lambda}{\sqrt{3}}.
\end{equation}
 On the one hand, for  $z=x+Iy$ as in subsection \ref{minimalsurface}, we may compute $dp(\partial z)$:
\begin{align*}
dp(\partial z)&=\frac{1}{2}\left[ dp\left(\partial u\right)-I\cdot dp\left(\partial v\right)\right]\\
&=\frac{1}{2\sqrt{2}\ 3^{3/4}\sqrt{\sin(2\Lambda)}}\left[ (1-I)\left( \sqrt{3}-2\cos\left(2\Lambda+\frac{\pi}{6}\right)  \right)E_2- \right. \\
&- (1+I)\left( \sqrt{3}-2\sin \left(2\Lambda+\frac{\pi}{3}\right)  \right)E_3 + (1-I)\left( 2\cos \left(2\Lambda+\frac{2\pi}{3}\right)+1  \right)JE_2+ \\
&\left.+  (1+I)\left( 2\cos \left(2\Lambda+\frac{\pi}{3}\right)-1  \right)JE_3\right].
\end{align*}
As 
\begin{align*}
 \sqrt{3}-2\cos\left(2\Lambda+\frac{\pi}{6}\right)=2\sin\Lambda(\sqrt{3}\sin\Lambda+\cos\Lambda),\quad 
2\sin \left(2\Lambda+\frac{\pi}{3}\right) - \sqrt{3}=2\sin\Lambda(\cos\Lambda-\sqrt{3}\sin\Lambda),\\
2\cos \left(2\Lambda+\frac{2\pi}{3}\right)+1 =2\sin\Lambda(\sin\Lambda-\sqrt{3}\cos\Lambda),\quad
2\cos \left(2\Lambda+\frac{\pi}{3}\right)-1=-2\sin\Lambda(\sin\Lambda+\sqrt{3}\cos\Lambda),
\end{align*}
we finally have
\begin{align*}
dp(\partial z)&=\frac{\sin\Lambda}{\sqrt{2}\ 3^{3/4}\sqrt{\sin(2\Lambda)}}\left[ (1-I)\left( \sqrt{3}\sin\Lambda+\cos\Lambda  \right)E_2+ (1+I)\left(\cos\Lambda - \sqrt{3}\sin\Lambda \right)E_3 +\right. \\
&  \left.  +  (1-I)\left(\sin\Lambda - \sqrt{3}\cos\Lambda \right)JE_2- (1+I)\left(\sin\Lambda + \sqrt{3}\cos\Lambda \right)JE_3    \right].
\end{align*}
Moreover, from \eqref{egalitate}, it follows that $\omega_z=\frac{1}{\sin(2\Lambda)}\left(\Lambda_u-i\Lambda_v\right)$.\\
On the other hand, we may compute $\nabla^E_{\partial z}dp(\partial z)$ using the Euclidean connection $\nabla^E$ :
\begin{align*}
\nabla^E_{\partial z} dp(\partial z)&=-\frac{1}{\sqrt{3}}E_1+\frac{e^{-\frac{i \pi }{4}} \sin ^2\Lambda \left(\sqrt{3} \cot \Lambda +3\right) (\Lambda_u-i \Lambda_v)}{3
   \sqrt[4]{3} \sin ^{\frac{3}{2}}(2\Lambda )}E_2+\\
&+\frac{e^{-\frac{i \pi }{4}} \sin\Lambda (\Lambda_v+i\Lambda_u) \left(\sqrt{3} \cos
   \Lambda-3 \sin \Lambda \right)}{3 \sqrt[4]{3} \sin ^{\frac{3}{2}}(2 \Lambda)}E_3+JE_1+\\
&+\frac{e^{-\frac{i \pi }{4}} \sin\Lambda( \Lambda_u-i
   \Lambda_v) \left(\sqrt{3} \sin \Lambda-3 \cos\Lambda\right)}{3 \sqrt[4]{3} \sin ^{\frac{3}{2}}(2 \Lambda
   )}JE_2-\\
&-\frac{\left(\frac{1}{3}+\frac{i}{3}\right) \sin\Lambda (\Lambda_u-i \Lambda_v) \left(\sqrt{3} \sin \Lambda+3 \cos\Lambda\right)}{\sqrt{2} \sqrt[4]{3} \sin ^{\frac{3}{2}}(2 \Lambda )}JE_3.
\end{align*}
From the previous computations we see, indeed, that 
\begin{align*}
\nabla^E_{\partial z}dp(\partial z)&=-N+\omega_z dp(\partial z),
\end{align*}
which corresponds to \eqref{secondder}.
 From here, we remark the component in the direction of the normal $N=\xi$ (see subsection \ref{subs}) and we see that  the choice of coordinates $\{t,u,v\}$ following from \eqref{vectors1} is the right one, as we have indeed  $\sigma(\partial z,\partial z)=-1$, as in subsection \ref{minimalsurface}.
Using \eqref{egalitate} together with the fact that, by taking the inverse in \eqref{X}, we have
\begin{align*}
&E_1=\partial t,\\
&E_2=\frac{3^{\frac{1}{4}} \sqrt{\sin (2\Lambda)}}{2\sqrt{2}} \left( \frac{\Lambda_v-\Lambda_u}{\sqrt{3}\sin( 2\Lambda) }\partial t+\partial u+\partial v \right),\\
&E_3=-\frac{3^{\frac{1}{4}} \sqrt{\sin (2\Lambda)}}{2\sqrt{2}} \left( \frac{\Lambda_v+\Lambda_u}{\sqrt{3}\sin( 2\Lambda) }\partial t+\partial u-\partial v \right),\\
\end{align*}
we may prove that equation \eqref{eqq7}  is equivalent to the Sinh-Gordon equation in \eqref{lap}, which  carachterizes the minimal surface.

\hfill\break
\textbf{ Reverse construction} \hfill\break

Let $S$ be a minimal surface given by $p:S\rightarrow \mathbb{S}^3\subset\mathbb{R}^4$, on which we take isothermal coordinates $ u$ and $v$ as in subsection \ref{minimalsurface}.  Hence, we have a solution $\omega$ of the Sinh-Gordon equation $\Delta\omega=-8\sinh\omega.$
Next, we define a function $\Lambda\in[0,\frac{\pi}{2}]$ such that 
$$
e^\omega=\frac{\tan\Lambda}{\sqrt{3}}.
$$
\begin{remark}
If $\omega=0$, then $\Lambda=\frac{\pi}{3}$, which corresponds to example (2) in Theorem \ref{th1}.
\end{remark}
We then define a metric on an open part of the unit frame bundle of the surface by assuming that the vectors 
\begin{align}
&E_1=\partial t,\nonumber\\
&E_2=\frac{\sqrt{3}e^{\omega/2}}{2\sqrt{1+3 e^{2\omega}}}\left(\frac{\omega_v-\omega_u}{2\sqrt{3}}\partial t+\partial u+\partial v \right ),\label{vectors}\\
&E_3=-\frac{\sqrt{3}e^{\omega/2}}{2\sqrt{1+3 e^{2\omega}}}\left(\frac{\omega_v+\omega_u}{2\sqrt{3}}\partial t+\partial u-\partial v \right )\nonumber
\end{align}
form an orthonormal basis.
Next, we want to determine the Lagrangian immersion 
\begin{align*}
&f:S\times I\rightarrow\mathbb{S}^3\times\mathbb{S}^3\\
&(u,v,t)\mapsto f(u,v,t)=(p(u,v,t),q(u,v,t)),
\end{align*}
for which we already know that the first component is the given minimal surface $p$. We write for both bases
\[ 
 \begin{array}{ccc}  \label{system}
 \frac{\partial}{\partial t}(q)=q\beta_1, &  \frac{\partial}{\partial t}(p)=p\alpha_1, \\
 \frac{\partial}{\partial u}(q)=q\beta_2, &  \frac{\partial}{\partial u}(p)=p\alpha_2, \\
 \frac{\partial}{\partial v}(q)=q\beta_3, &  \frac{\partial}{\partial v}(p)=p\alpha_3
 \end{array}  
\text{and}
  \begin{array}{ccc}
E_1(q)=q\tilde{\beta}_1, & E_1(p)=p\tilde\alpha_1, \\
E_2(q)=q\tilde{\beta}_2, & E_2(p)=p\tilde\alpha_2, \\ 
E_3(q)=q \tilde{\beta}_3, & E_3(p)=p \tilde\alpha_3.
 \end{array} \]
Note that $\alpha_1=0$ and $\alpha_2$ and $\alpha_3$ are determined by the minimal surface. In particular $\alpha_2$ and $\alpha_3$ are mutually orthogonal imaginary quaternions with length squared $2 e^\omega$.
 From the derivates of $p$ in the latter relations together with \eqref{vectors}, we obtain
\begin{align}
&\tilde\alpha_1=0,\nonumber\\
&\tilde\alpha_2=\frac{\sqrt{3}e^{\omega/2}}{2\sqrt{1+3e^{2\omega}}}(\alpha_2+\alpha_3),\label{tildealphas}\\
&\tilde\alpha_2=-\frac{\sqrt{3}e^{\omega/2}}{2\sqrt{1+3e^{2\omega}}}(\alpha_2-\alpha_3).\nonumber
\end{align}
We then follow the same steps as in \emph{Case 1} and obtain 
\begin{align}
&\tilde\beta_1=-\frac{\sqrt{3}e^{-\omega}}{4}\alpha_2\times\alpha_3,\nonumber\\
&\tilde\beta_2=\frac{\sqrt{3}(e^{\omega/2}-e^{-\omega/2})}{4\sqrt{1+3e^{2\omega}}}(\alpha_2+\alpha_3),\\
&\tilde\beta_3=-\frac{\sqrt{3}(e^{\omega/2}-e^{-\omega/2})}{4\sqrt{1+3e^{2\omega}}}(\alpha_2-\alpha_3).\nonumber
\end{align}
Finally, we take the inverse of the matrix which give  $\{E_i\}$ in the basis $\{ \partial t, \partial u,\partial v\}$ in \eqref{vectors}  and obtain 
\begin{align*}
&\beta_1=-\frac{\sqrt{3}e^{-\omega}}{4}\alpha_2\times\alpha_3,\\
&\beta_2=\frac{e^{-\omega}}{8}(4e^{\omega}\alpha_2-4\alpha_3+\omega_v\alpha_2\times\alpha_3),\\
&\beta_3=-\frac{e^{-\omega}}{8}(4\alpha_2-4e^{\omega}\alpha_3+\omega_u\alpha_2\times\alpha_3).
\end{align*}
By straightforward computations, it now follows that
\begin{align*}
&\frac{\partial \beta_1}{\partial u}-\frac{\partial \beta_2}{\partial t}-2 \beta_1 \times \beta_2 =0,\\
&\frac{\partial \beta_1}{\partial v}-\frac{\partial \beta_3}{\partial t}-2 \beta_1 \times \beta_3 =0,\\
&\frac{\partial \beta_3}{\partial u}-\frac{\partial \beta_2}{\partial v}-2 \beta_3 \times \beta_2 =0,
\end{align*}
from which we deduce that the integrability conditions for the immersion $q$ are satsified.

\section{Conclusion}\label{conclusion}

The results in Section \ref{section3.3} can now be summarized in the following theorems.

\begin{theorem}\label{t1}
Let $\omega$ and $\mu$ be solutions of, respectively, the Sinh-Gordon equation $\Delta\omega=-8\sinh\omega$ and the Liouville equation  $\Delta\mu=-e^\mu$ on an open simply connected domain $U \subseteq\mathbb{C}$ and let $p:U\rightarrow\mathbb{S}^3$ be the associated minimal surface with complex coordinate $z$ such that $\sigma(\partial z,\partial z)=-1.$\\
Let $V=\{(z,t)\mid z\in U, t\in \mathbb{R}, e^{\omega+\mu}-2-2\cos(4t)>0 \}$ and let $\Lambda$ be a solution of $$\left(\frac{2\sqrt{3}e^\omega}{\tan\Lambda}-2\sin(2t) \right)= e^{\omega+\mu}-2-2\cos(4t)$$ on $V$. Then, there exists a Lagrangian immersion $f:V\rightarrow \nks : x\mapsto (p(x),q(x))$, where $q$ is determined by 
\begin{align*}
\frac{\partial q}{\partial t}=& -\frac{\sqrt{3} }{2 \sqrt{3} e^{\omega }-2 \sin
   (2t) \tan \Lambda }\ q\ \alpha_2 \times \alpha_3, \\
\frac{\partial q}{\partial u}=& \frac{1}{8} \Bigl(e^{-\omega } \left( {\mu_v}+ {\omega_v}-\frac{( {\mu_u}+ {\omega_u}) \cos
   (2 t) \tan \Lambda }{\sqrt{3} e^{\omega }
  -\sin (2 t) \tan \Lambda }\right)q \ \alpha_2 \times \alpha_3  - 4 (\sqrt{3} \cot \Lambda \cos (2 t)+1)\ q\  \alpha_2 - \Bigr. \\ 
& \Bigl. -4 \sqrt{3}\sin (2 t) \cot\Lambda \ q\ \alpha_3   \Bigr), \\
\frac{\partial q}{\partial v}=& \frac{1}{8} \Bigl(-e^{-\omega } \left( {\mu_u}+ {\omega_u}+\frac{( {\mu_v}+ {\omega_v}) \cos
   (2 t) \tan\Lambda }{\sqrt{3} e^{\omega }
  -\sin (2 t) \tan \Lambda }\right) \ q\ \alpha_2 \times \alpha_3  - 4 \sqrt{3} \cot \Lambda \sin (2 t)\  q\ \alpha_2+ \Bigr. \\ &\Bigl.  +4(1+ \sqrt{3}\cos (2 t) \cot\Lambda)\ q \ \alpha_3   \Big),
\end{align*}
where $\alpha_2=\bar p p_u$ and $\alpha_3=\bar p p_v$.
\end{theorem}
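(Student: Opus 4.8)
The plan is to recognize Theorem \ref{t1} as the synthesis of the reverse construction of \emph{Case 1} in Section \ref{section3.3}. Given the data $\omega$, $\mu$, $p$ and $\Lambda$, I would first reconstruct all the auxiliary objects: the functions $h_{13}^3,h_{12}^3,h_{22}^3,h_{23}^3$ as in \eqref{h} and the display following it, and the frame $\{E_1,E_2,E_3\}$ as in \eqref{framerelation}. With $\alpha_2=\bar pp_u$ and $\alpha_3=\bar pp_v$, the minimal surface structure equations recorded in the reverse construction show that $\alpha_2,\alpha_3$ are orthogonal imaginary quaternions of length squared $2e^\omega$ and prescribe $\partial_u\alpha_i,\partial_v\alpha_i$. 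The quaternions $\beta_1,\beta_2,\beta_3$ of the statement are then exactly the ones produced there, so that the theorem reduces to integrating the first--order system $\partial_tq=q\beta_1$, $\partial_uq=q\beta_2$, $\partial_vq=q\beta_3$ and checking that $f=(p,q)$ is Lagrangian.

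The decisive step is integrability of this system. Since the domain is simply connected, a solution $q$ (unique up to left multiplication by a fixed unit quaternion) exists precisely when the compatibility conditions
\begin{align*}
&\partial_u\beta_1-\partial_t\beta_2-2\beta_1\times\beta_2=0,\\
&\partial_v\beta_1-\partial_t\beta_3-2\beta_1\times\beta_3=0,\\
&\partial_u\beta_3-\partial_v\beta_2-2\beta_3\times\beta_2=0
\end{align*}
hold; these are exactly the symmetry of the mixed derivatives $\partial_{x_i}\partial_{x_j}q$, using that each $\beta_i$ is purely imaginary so that $\beta_j\beta_i-\beta_i\beta_j=-2\beta_i\times\beta_j$. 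To verify them I would expand every $\beta_i$ in the frame $\{\alpha_2,\alpha_3,\alpha_2\times\alpha_3\}$, reduce all cross products by $\alpha_2\times(\alpha_2\times\alpha_3)=-2e^\omega\alpha_3$ and its companions, and then substitute both the structure equations for $\partial_u\alpha_i,\partial_v\alpha_i$ and the derivatives of $\Lambda$ obtained by differentiating \eqref{43}. After these substitutions the purely $\omega$--dependent part of the three identities closes by the Sinh--Gordon equation \eqref{lap}, while the $\mu$--dependent part closes by the Liouville equation \eqref{diff}, in the same manner as the reduction of \eqref{diff2} to \eqref{diff}.

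Assuming integrability, I would finish as follows. A solution $q$ exists on $V$; since each $\beta_i$ is imaginary, $\partial_{x_i}\langle q,q\rangle=2\langle q\beta_i,q\rangle=0$, so $|q|$ is constant and the normalization $|q|=1$ places $q$ in $\mathbb{S}^3$. To see that $f=(p,q)$ is a Lagrangian immersion, I would use $E_i=(p\tilde\alpha_i,q\tilde\beta_i)$ from \eqref{expressionbasisE}, the expressions \eqref{JEi2} for $JE_i$, and the metric \eqref{g}: a direct computation yields $g(E_i,E_j)=\delta_{ij}$ and $g(E_i,JE_j)=0$. The former shows that $f$ is an immersion whose induced metric makes $\{E_i\}$ orthonormal, and the latter that $JT_xM\perp T_xM$, i.e.\ $f$ is totally real of half the ambient dimension, hence Lagrangian. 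That $\theta_1=\tfrac\pi3$, $\theta_2=\Lambda+\tfrac\pi3$, $\theta_3=-\Lambda+\tfrac\pi3$ are its angle functions follows because the $\tilde\beta_i$ were solved directly from \eqref{PEE2}, so $PE_i=\cos(2\theta_i)E_i+\sin(2\theta_i)JE_i$ holds by construction.

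The main obstacle is the integrability verification of the second paragraph. It is a lengthy but mechanical quaternionic computation whose only genuine difficulty is the systematic elimination of the $\Lambda$--derivatives in favour of $\omega,\mu$ and their first derivatives via \eqref{43}; the delicate point is the collapse of the $\mu$--dependent terms onto the Liouville equation, exactly mirroring the passage from \eqref{diff2} to \eqref{diff} already carried out in Section \ref{section3.3}. Once the three compatibility conditions are confirmed, all remaining assertions are immediate.
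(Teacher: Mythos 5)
Your proposal is correct and follows essentially the same route as the paper's own argument: it reconstructs the frame \eqref{framerelation} and the coefficients $h_{ij}^3$, derives the $\tilde\beta_i$ from the eigenvector conditions for $A$ and $B$ (equivalently from $PE_i=\cos(2\theta_i)E_i+\sin(2\theta_i)JE_i$ together with the $G(E_2,E_3)$ computation), inverts the frame relation to obtain $\beta_1,\beta_2,\beta_3$, and reduces everything to the three quaternionic compatibility identities, whose verification uses the Sinh--Gordon and Liouville equations exactly as in the passage from \eqref{diff2} to \eqref{diff}. Your explicit check that $|q|$ is constant and that $g(E_i,E_j)=\delta_{ij}$, $g(E_i,JE_j)=0$ is a welcome addition that the paper leaves implicit, but it does not constitute a different method.
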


\begin{theorem}\label{t2}
Let $X_1,X_2,X_3$ be the standard vector fields on $\mathbb{S}^3$. Let $\beta$ be a solution of the differential equations 
\begin{align*}
&X_1(\beta)=0,\\
&X_2(X_2(\beta))+X_3(X_3(\beta))=\frac{2(3-e^{4\beta})}{e^{4\beta}},
\end{align*}
on a connected, simply connected open subset $U$ of $\mathbb{S}^3$.\\
 Then there exist a Lagrangian immersion $f:U\rightarrow \nks : x\mapsto (p(x),q(x))$, where $p(x)=xix^{-1}$ and $q$ is determined by
\begin{align*}
\begin{array}{l}
X_1(q)=-2qhxix^{-1}h^{-1},\\
X_2(q)=q\left(-X_3(\beta)  hxix^{-1}h^{-1}-(1-\sqrt{3} e^{-2\beta})\ hxjx^{-1}h^{-1}\right),\\
X_3(q)=q\left( X_2(\beta)\ hxix^{-1}h^{-1} -(1+\sqrt{3} e^{-2\beta}) \ hxkx^{-1}h^{-1}\right).
\end{array}
\end{align*}
\end{theorem}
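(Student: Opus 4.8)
The plan is to reverse the derivation of Case~2: from the abstract datum $\beta$ I reconstruct the geometric quantities and the second component $q$, and then verify that $f=(p,q)$ is a Lagrangian immersion. First I would set $\Lambda=\arctan(e^{2\beta})$ and define $h_{22}^3,h_{23}^3$ through \eqref{newstar}; the hypothesis $X_1(\beta)=0$ then forces $X_1(\Lambda)=0$, consistent (via \eqref{newbasis}) with $E_1(\Lambda)=h_{13}^3=0$ for a totally geodesic surface. The first component $p(x)=xix^{-1}$ has two-dimensional image and hence nowhere maximal rank, so Theorem~\ref{th1} applies and the surface is totally geodesic as in Theorem~\ref{th3}. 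With this $p$, the first-order system for $q$ displayed in the statement is exactly the one obtained at the end of Case~2. The whole content of the theorem is therefore that this overdetermined system is completely integrable on the simply connected domain $U$.

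Next I would establish integrability. Writing $X_i(q)=q\beta_i$ with the $\beta_i$ the imaginary quaternions appearing in the statement, the Frobenius conditions are $X_i(X_j(q))-X_j(X_i(q))=[X_i,X_j](q)$, which, using the structure relations \eqref{lieX}, reduce after cancelling the factor $q$ on the left to the quaternionic identities
\begin{equation*}
X_i(\beta_j)-X_j(\beta_i)+\beta_i\beta_j-\beta_j\beta_i=2\,\varepsilon_{ijk}\,\beta_k .
\end{equation*}
The plan is to substitute the explicit $\beta_i$ and expand. The terms $X_i(\beta_j)$ produce first and second derivatives of $\beta$ together with derivatives of the frame $hxix^{-1}h^{-1}$, $hxjx^{-1}h^{-1}$, $hxkx^{-1}h^{-1}$, which by \eqref{gi} are conjugates of $i,j,k$ and so obey the same multiplication rules; the products $\beta_i\beta_j$ contribute the corresponding quaternionic commutators. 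For the two pairs involving $X_1$, the identity is satisfied using only $X_1(\beta)=0$ together with these quaternion relations, so these cases are routine.

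The main obstacle is the pair $(X_2,X_3)$. Here the first-derivative and frame-commutator terms cancel, and what survives is precisely the combination $X_2(X_2(\beta))+X_3(X_3(\beta))$ set against the right-hand side $2(3-e^{4\beta})/e^{4\beta}$; that is, the $(2,3)$ compatibility condition is equivalent to the assumed second-order equation for $\beta$. This is exactly the Codazzi equation \eqref{rel1prim} rewritten in the frame $\{X_i\}$, which was simplified to \eqref{ec} in the body, so no further condition appears. Confirming this collapse is the longest computation, but it is mechanical given the quaternion algebra.

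Finally, I would conclude. Since each $\beta_i$ is imaginary, the system preserves $\langle q,q\rangle$, so the solution granted by Frobenius on the simply connected $U$ is $\mathbb{S}^3$-valued and is unique once an initial value $q(x_0)\in\mathbb{S}^3$ is fixed. It remains to check that $f=(p,q)$ is Lagrangian with the prescribed angle functions. This follows by reading the computation of Case~2 backwards: the relations \eqref{alphabeta} defining $\tilde\beta_1,\tilde\beta_2,\tilde\beta_3$ in terms of $\tilde\alpha_2,\tilde\alpha_3$ were engineered precisely so that, writing $PX=AX+JBX$, one recovers $PE_i=\cos(2\theta_i)E_i+\sin(2\theta_i)JE_i$ with $\theta_1=\tfrac{\pi}{3}$ and $\theta_{2,3}=\pm\Lambda+\tfrac{\pi}{3}$; equivalently $A$ and $B$ have the eigenvalues forced by Theorem~\ref{th1}, and $g(JE_i,E_j)=0$, so $JTM$ is normal and $M$ is Lagrangian.
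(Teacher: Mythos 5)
Your proposal is correct and follows essentially the same route as the paper: it reverses the Case~2 derivation, defines $\Lambda=\arctan(e^{2\beta})$ and the data $h_{22}^3,h_{23}^3$ via \eqref{newstar}, and reduces the existence of $q$ to the Frobenius conditions $X_i(X_j(q))-X_j(X_i(q))=[X_i,X_j](q)$, with the $(X_2,X_3)$ condition absorbing the second-order equation for $\beta$ (i.e.\ the Codazzi equation \eqref{rel1prim} in the form \eqref{ec}) — exactly the verification the paper performs. Your added remarks (that the $X_1$-pairs need only $X_1(\beta)=0$ plus the bracket relations, that imaginary $\beta_i$ keep $q$ on $\mathbb{S}^3$, and that the Lagrangian condition is built into \eqref{alphabeta}) merely make explicit what the paper leaves as "straightforward computations."
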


Note that in the previous theorem the image of $p$ is a totally geodesic surface in $\mathbb{S}^3.$

\begin{theorem}\label{t3}
Let $\omega$ be a solution of the Sinh-Gordon equation $\Delta\omega=-8\sinh\omega$ on an open connected domain of $U$ in $\mathbb{C}$ and let $p:U\rightarrow \mathbb{S}^3$ be the associated minimal surface with complex coordinate $z$ such that $\sigma(\partial z,\partial z)=-1.$ Then, there exist a Lagrangian immersion $f:U\times \mathbb{R}\rightarrow\nks:x\mapsto (p(x),q(x)) $, where $q$ is determined by 
\begin{align*}
&\frac{\partial q}{\partial t}=-\frac{\sqrt{3}e^{-\omega}}{4}q \ \alpha_2\times\alpha_3,\\
&\frac{\partial q}{\partial u}=\frac{e^{-\omega}}{8}(4e^{\omega}q \alpha_2-4  q \alpha_3+\omega_v q\ \alpha_2\times\alpha_3),\\
&\frac{\partial q}{\partial v}=-\frac{e^{-\omega}}{8}(4 q \alpha_2-4e^{\omega}q \alpha_3+\omega_u  q\ \alpha_2\times\alpha_3).
\end{align*}
where $\alpha_2=\bar p p_u$ and $\alpha_3=\bar p p_v.$
\end{theorem}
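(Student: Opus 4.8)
The plan is to recognize Theorem~\ref{t3} as the reverse construction carried out in Case~3 of Section~\ref{section3.3}, now read in the opposite direction: instead of extracting a minimal surface from a given Lagrangian immersion, we start from the minimal surface and reconstruct the immersion. Concretely, I would take the minimal surface $p\colon U\to\mathbb{S}^3$ associated with the given solution $\omega$ of the Sinh-Gordon equation \eqref{lap}, set $M=U\times\mathbb{R}$ with coordinates $(u,v,t)$, introduce the auxiliary function $\Lambda$ through $e^\omega=\tfrac{\tan\Lambda}{\sqrt3}$ as in \eqref{egalitate}, and equip $M$ with the Riemannian metric for which the vector fields $E_1,E_2,E_3$ of \eqref{vectors} form an orthonormal frame. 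The first component of $f$ is the given $p$, which is independent of $t$; hence $dp(E_1)=0$, in accordance with Theorem~\ref{th1} and the requirement that the first component have nowhere maximal rank.

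Next I would build the second component $q$. Writing $q_t=q\beta_1$, $q_u=q\beta_2$, $q_v=q\beta_3$ with imaginary quaternions $\beta_i$, the coefficients are forced by the demand that $E_1,E_2,E_3$ be eigenvectors of the operators $A$ and $B$ with the angle functions prescribed in \eqref{unghi}. Exactly as in Case~1, the tangential parts $\tilde\beta_2,\tilde\beta_3$ are obtained by comparing $PE_i$ computed from \eqref{defP} with the expression $\cos(2\theta_i)E_i+\sin(2\theta_i)JE_i$ of \eqref{Pei} using \eqref{JEi2}, while $\tilde\beta_1$ is pinned down by evaluating $G(E_2,E_3)$ in the two ways provided by \eqref{G} and \eqref{formulaG}. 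Passing from the frame $\{E_i\}$ back to $\{\partial t,\partial u,\partial v\}$ via the inverse of \eqref{vectors} and using \eqref{tildealphas} then yields exactly the $\beta_1,\beta_2,\beta_3$ displayed in the statement, with $\alpha_2=\bar p\,p_u$ and $\alpha_3=\bar p\,p_v$.

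The heart of the proof is the existence of $q$ as an $\mathbb{S}^3$-valued map. Since $q$ takes values in the unit quaternions, the overdetermined system $q_{x_i}=q\beta_i$ is solvable precisely when the compatibility conditions $\partial_{x_i}\beta_j-\partial_{x_j}\beta_i-2\,\beta_i\times\beta_j=0$ hold, which arise from equating mixed second derivatives together with $[\beta_i,\beta_j]=2\,\beta_i\times\beta_j$. I expect verifying these three identities to be the main obstacle: one must substitute the structure equations of the minimal surface, that is, the first derivatives of $\alpha_2$ and $\alpha_3$ which encode the Gauss and Codazzi equations of $p$, and then reduce everything using the Sinh-Gordon equation for $\omega$. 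This is the only place where minimality is genuinely consumed, and it is precisely the reverse of the reduction of \eqref{eqq7} to the Sinh-Gordon equation established earlier. Integrating the resulting compatible system then produces $q$, uniquely up to left multiplication by a fixed unit quaternion, and globally once the domain is simply connected.

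It then remains to confirm that $f=(p,q)$ is a Lagrangian immersion of $\nks$. The immersion property is checked directly from the explicit frame: $df(E_1)=(0,q\tilde\beta_1)$ lies in the second factor with $\tilde\beta_1\neq0$, while $df(E_2)$ and $df(E_3)$ have linearly independent projections to the first factor, so $df$ is injective. For the Lagrangian condition I would verify $g(JE_i,E_j)=0$ for all $i,j$; this holds because the coefficients $\tilde\beta_i$ were chosen exactly so that $PE_i=\cos(2\theta_i)E_i+\sin(2\theta_i)JE_i$, and since $P$ and $J$ anti-commute, the relation \eqref{Pei} forces $JTM$ into the normal bundle. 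Thus $f$ realizes the desired Lagrangian immersion with first component the prescribed minimal surface, completing the construction.
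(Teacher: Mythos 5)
Your proposal is correct and follows essentially the same route as the paper: define $\Lambda$ by $e^\omega=\tan\Lambda/\sqrt3$, impose the orthonormal frame \eqref{vectors}, determine the $\tilde\beta_i$ from the almost product structure $P$ and from computing $G(E_2,E_3)$ in two ways (the paper simply refers to ``the same steps as in Case 1''), pass to the coordinate frame to obtain the stated $\beta_i$, and verify the quaternionic integrability conditions $\partial_u\beta_1-\partial_t\beta_2-2\beta_1\times\beta_2=0$ etc.\ to integrate for $q$. Your closing verification that $f$ is a Lagrangian immersion is left implicit in the paper but is a natural and correct addition.
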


\begin{theorem}
Let $f:M\rightarrow \nks : x\mapsto (p(x),q(x))$ be a Lagrangian immersion such that $p$ has nowhere maximal rank. Then every point $x$ of an open dense subset of $M$ has a neighborhood $U$ such that $f|_U$ is obtained as described in Theorem \ref{t1}, \ref{t2} or \ref{t3}.
\end{theorem}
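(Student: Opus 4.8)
The plan is to assemble the forward analysis of Sections~\ref{subs} and~\ref{section3.3} with the reverse constructions of Theorems~\ref{t1}--\ref{t3}, the only genuinely new ingredient being a partition argument showing that the three cases cover an open dense subset. First I would reduce to the generic stratum. Since $p$ has nowhere maximal rank, Theorem~\ref{th1} gives $\theta_1=\tfrac{\pi}{3}$ and hence the normalisation \eqref{unghi}; restricting to the open dense set on which the eigenvalue multiplicities of $A$ and $B$ are locally constant, the frame $E_1,E_2,E_3$ is defined and smooth, and one may further restrict to $\{\Lambda\in(0,\tfrac{\pi}{2})\}$. As $M$ is non-totally geodesic (the standing hypothesis of the paper), $\Lambda\not\equiv0$, so this stratum is nonempty; by real-analyticity of the (minimal, hence analytic) immersion it is open and dense, and on it Theorem~\ref{th3} shows that $p(M)$ is an honest minimal surface.

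The crucial step is the trichotomy. On the generic stratum introduce the real-analytic function $\phi=\tfrac{1}{\sqrt3}+h_{12}^3\csc(2\Lambda)$, whose non-vanishing is exactly the condition for $\tilde{\mathcal P}$ to be an immersion. Using the expressions for $\sigma$ from the proof of Theorem~\ref{th3} one checks that total geodesy of $p(M)$ forces $h_{12}^3=\tfrac{1}{2\sqrt3}\sin(2\Lambda)$, whence $\phi=\tfrac{\sqrt3}{2}\neq0$; thus the totally geodesic locus lies inside $\{\phi\neq0\}$. On each connected component of the stratum, analyticity yields a dichotomy for $\phi$: either $\phi\equiv0$, so $h_{12}^3=-\tfrac{1}{\sqrt3}\sin(2\Lambda)$ everywhere and the component is of \emph{Case~3} type; or $\{\phi=0\}$ is nowhere dense and $\tilde{\mathcal P}$ is an immersion on the open dense set $\{\phi\neq0\}$. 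In the second situation I would test total geodesy through $\psi=(h_{13}^3)^2+\bigl(h_{12}^3-\tfrac{1}{2\sqrt3}\sin(2\Lambda)\bigr)^2$: again analyticity gives either $\psi\equiv0$ (\emph{Case~2}) or $\{\psi\neq0\}$ open dense (\emph{Case~1}). Since $\psi$ cannot vanish on an open set without vanishing identically, the three cases do not mix on a component, and taking the union of the resulting open dense subsets of the various components produces an open dense subset of $M$ on which, locally, exactly one of the three cases applies.

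It remains to match $f|_U$ with the corresponding construction, which amounts to reading off the invariants already computed. In Case~1 the conformal factor $\omega$ solves Sinh-Gordon and the function $\mu$, defined by $c_1=e^{\omega+\mu}-2$ in \eqref{43}, solves the Liouville equation \eqref{diff}; in Case~2 the function $\beta$ with $\Lambda=\arctan(e^{2\beta})$ solves the system preceding Theorem~\ref{t2}; in Case~3 the datum is the single solution $\omega$ of Sinh-Gordon, with $\Lambda$ determined by $e^\omega=\tan\Lambda/\sqrt3$ as in \eqref{egalitate}. In every case the computations of Section~\ref{section3.3} express the imaginary-quaternion-valued functions $\beta_i$ governing the second factor through $\partial q=q\beta_i$ in terms of precisely this data, and the integrability conditions verified at the end of each case show that the system for $q$ is compatible.

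Finally I would invoke rigidity. The immersion $\tilde f$ furnished by the relevant theorem and our $f|_U$ share the same first component $p$ and the same $\beta_i$, so their second components solve one and the same integrable first-order system $\partial q=q\beta$, whose solution is unique once prescribed at a point. Changing that initial value replaces $q$ by $aq$ for a fixed unit quaternion $a$, and $(p,q)\mapsto(p,aq)$ is an isometry of the nearly K\"ahler $\nks$; hence $f|_U$ agrees with the construction of Theorem~\ref{t1}, \ref{t2} or \ref{t3} up to an isometry. The main obstacle is the trichotomy of the second paragraph: the reverse constructions and integrability checks are already established, so the substantive point is to use analyticity to guarantee that the three mutually exclusive cases genuinely exhaust an open dense subset rather than leaving a nowhere-dense remainder uncovered within a single case.
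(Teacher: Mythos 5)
Your proposal is correct and follows essentially the same route as the paper, which presents this theorem as a direct summary of the case analysis of Section \ref{section3.3}: the trichotomy according to whether $p(M)$ is totally geodesic and whether $\tilde{\mathcal P}$ is an immersion, the forward computations identifying $\omega$, $\mu$, $\beta$ in each case, and the recovery of $q$ from the integrable system $\partial q = q\beta_i$. Your explicit open-dense partition argument and the final rigidity step (uniqueness of $q$ up to left translation) simply make precise what the paper leaves implicit.
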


\end{document}